\documentclass[11pt]{amsart}
\usepackage{amssymb,latexsym,amsmath,amsfonts,amsthm}
\paperheight = 27cm
\usepackage{amssymb, amscd}
\usepackage{mathrsfs}
\usepackage{eucal}

\usepackage{marginnote}

\usepackage{color}
\usepackage{enumerate}
\usepackage{soul}
\usepackage{xcolor}

\usepackage{mathtools}

\numberwithin{equation}{section}

\renewcommand{\pmb}{\boldsymbol}
\usepackage{graphicx}
\usepackage{amscd}

\usepackage{times}



\usepackage{fullpage}

\DeclareSymbolFont{SY}{U}{psy}{m}{n}
\DeclareMathSymbol{\emptyset}{\mathord}{SY}{'306}

\theoremstyle{plain}

\newtheorem{thm}{Theorem}[section]
\newtheorem{cor}[thm]{Corollary}
\newtheorem{lem}[thm]{Lemma}
\newtheorem{prop}[thm]{Proposition}

\theoremstyle{definition}
\newtheorem{defn}[thm]{Definition}

\newtheorem{rem}[thm]{Remark}

\newtheorem{qn}[thm]{Question}

\newcommand{\norm}[1]{\left\lVert#1\right\rVert}
\newcommand{\overbar}[1]{\mkern 1.5mu\overline{\mkern-1.5mu#1\mkern-1.5mu}\mkern 1.5mu}



\newcounter{defcounter}
\setcounter{defcounter}{0}

\title[curvature inequalities]{Curvature inequalities and extremal operators}
\author{Gadadhar Misra}
\author{Md. Ramiz Reza}
\address{Gadadhar Misra: Department of Mathematics, Indian Institute of Science, Bangalore}
\email{gm@iisc.ac.in}

\address{Md. Ramiz Reza: Department of Mathematics, Indian Institute of Science, Bangalore}
\email{ramiz.md@gmail.com}

\begin{document}


\thanks{The work of  G. Misra was supported, in part,  the J C Bose National Fellowship and the UGC, SAP -- CAS. The work of Md. Ramiz Reza was supported, in part, by a Senior Research Fellowship of the CSIR. The results of this paper are from his PhD thesis submitted to the Indian Institute of Science.}

\subjclass[2010]{30C40, 47A13, 47A25} 
\keywords{Cowen-Douglas class, curvature inequality, extremal operators, S\"{z}ego kernel, holomorphic hermitian vector bundle}

\begin{abstract}
A curvature inequality is established for contractive commuting tuples of operators $\boldsymbol T$ in the Cowen-Douglas class $B_n(\Omega^*)$ of rank $n$ defined on some bounded domain $\Omega$ in $\mathbb C^m.$ 
Properties of the extremal operators, that is, the operators which achieve equality, are investigated. Specifically, a substantial part of a well known question due to R. G. Douglas involving these extremal operators, in the case of the unit disc, is answered. 

\end{abstract}
\maketitle
\section{Introduction}

For a fixed $n\in\mathbb N,$ and a bounded domain $\Omega\subseteq \mathbb C^m,$ 
the important class of operators $B_n(\Omega^*),$  $\Omega^*=\{\bar{z}: z\in \Omega\},$ defined below, was introduced in the papers \cite{CD} and \cite{Douglasopen} by Cowen and Douglas. An alternative approach to the study of this class of operators is presented in the paper  \cite{Curtosalinas} of Curto and Salinas. For $w=(w_1,w_2,\ldots, w_m)$ in $\Omega^*,$   let $\mathcal{D}_{\pmb{T}-w\pmb{I}}: \mathcal H \to \mathcal H \oplus \mathcal H \oplus \cdots \oplus \mathcal H$ be the operator: $\mathcal{D}_{\pmb{T}-w\pmb{I}}(h) = \oplus _{k=1}^{m} (T_k- w_k I)h,$ $h\in \mathcal H.$ 
\begin{defn}
A $m$-tuple of commuting bounded operators $\pmb{T} = (T_1,T_2,\ldots,T_m)$ on a complex separable Hilbert space $\mathcal H$ is said to be in $B_n(\Omega^*)$ if 
\begin{enumerate}
\item $\dim \big (\cap _{k=1}^{m} \text{ker}(T_k -w_kI) \big ) = n$ for each  $w \in \Omega^*;$ 
\item the operator $\mathcal{D}_{\pmb{T}-w\pmb{I}},$  $w \in \Omega^*,$  has closed range and 
\item $\bigvee_{w\in \Omega^*} \big(\cap_{k=1}^{m} \text{ker}(T_k -w_kI)\big) = \mathcal{H}$
\end{enumerate}
\end{defn}
For any commuting tuple of operators $\pmb{T}$ in $B_n(\Omega^*),$ the existence of a rank $n$ holomorphic Hermitian  vector bundle $E_{\pmb T}$ over $\Omega^*$ was established in \cite{Douglasopen}. Indeed, 
$$E_{\pmb T} := \big \{(w,v)\in \Omega^* \times \mathcal{H} : v\in\cap_{k=1}^{m} \ker(T_k -w_kI)\big \}, \pi (w,v)=w,$$
admits a local holomorphic cross-section. 
In the paper \cite{CD}, for $m=1,$ it is shown that two commuting $m$-tuple of operators $\pmb T$ and $\pmb S$ in $B_n(\Omega^*)$ are jointly unitarily equivalent if and only if  $E_{\pmb T}$  and $E_{\pmb S}$ are locally equivalent as  holomorphic Hermitian vector  bundles. This proof works for the case $m > 1$ as well. 

Suppose $\mathscr{K} = \mathscr{K}(E_{\pmb T} , D)$ is the curvature associated with canonical connection $D$ of the holomorphic Hermitian vector bundle $E_{\pmb T}.$ Then relative to any $C^{\infty}$ cross-section  $\sigma$ of   $E_{\pmb T},$ we have 
\begin{equation*}
\mathscr{K}(\sigma) = \sum\limits_{i,j=1}^{m}\mathcal{K}^{i,j} (\sigma)\; dz_i \wedge d\bar{z}_j,\;\;
\end{equation*} where each $\mathcal{K}^{i,j}$ is a $C^{\infty}$ cross-section of  $\text{Hom}(E_{\pmb T} , E_{\pmb T})$.
Let $\pmb{\gamma}(z) = (\gamma_1(z),\ldots,\gamma_n(z))$ be a local holomorphic frame of $E_{\pmb T}$ in a neighbourhood $\Omega^*_0\subset \Omega^*$ of some $w\in \Omega^*.$ The metric of the bundle $E_{\pmb T}$ at $z\in \Omega^*_0$ w.r.t the frame $\pmb{\gamma}$ has the matrix representation
\begin{align*}
h_{\pmb{\gamma}}(z) &= \big(\!\!
           \big( \langle \gamma_j(z), \gamma_i(z) \rangle \big)
          \!\!\big)_{i,j = 1}^{n}.
\end{align*}
We write $\partial_i= \frac{\partial}{\partial z_i}$ and $\bar{\partial}_i= \frac{\partial}{\partial \bar{z}_i}.$ The  co-efficients of the curvature $(1,1)$-form $\mathscr K$ w.r.t the frame $\pmb{\gamma},$  are explicitly determined by the formula: 
\begin{align*}
\mathcal{K}^{i,j}(\pmb{\gamma})(z) &= - \bar{\partial _j} \bigg((h_{\pmb{\gamma}}(z))^{-1}\big(\partial _i h_{\pmb{\gamma}}(z)\big)\bigg),\;z\in \Omega^*_0.
\end{align*}


For a bounded domain $\Omega$ in $ \mathbb C$ and for $T$ in $B_n(\Omega^*),$ recall that $N_w^{(k)}$ is the restriction of the operator $(T-wI)$ to the subspace $\ker(T-w I)^{k+1}.$  In general, even if $m=1,$ it is not possible to put  the operator $N_w^{(k)}$ into any reasonable canonical form, see \cite[sec. 2.19]{CD}. Here we show how to do this for any $m \in \mathbb N,$ assuming that $k=1.$ The canonical form of the operator $N_w^{(1)},$ we find here, is a crucial ingredient in obtaining the curvature inequality for a commuting tuple of operator $\pmb T$ in $B_n(\Omega^*),$ which admit $\overbar{\Omega}^*,$ the closure of $\Omega^*, $ as a spectral set.


A commuting $m$-tuple of operator $\pmb T$ in $B_n(\Omega^*),$ may be realized as the $m$-tuple  $\pmb M^*= (M_{z_1}^*,\ldots,M_{z_m}^*),$ the adjoint of the multiplication by the $m$ coordinate functions on some Hilbert space of holomorphic functions defined on $\Omega$ possessing a reproducing kernel $K$ (cf. \cite{CD, Curtosalinas}). The real analytic function $K(z,z)$ then serves as a Hermitian metric for the vector bundle $E_T$ w.r.t. the holomorphic frame $\gamma_i(\bar{z}):=K(\cdot ,z)e_i,$ $i=1, \ldots, n,$ $\bar{z}$ in some open  subset $\Omega_0^*$ of $\Omega^*.$ Here the vectors $e_i,$ 
 $i=1, \ldots, n,$ are the standard unit vectors of $\mathbb C^n.$ 
For a point $z\in \Omega,$ let $\mathscr{K}_{T}(\bar{z})$ be the curvature of the vector bundle $E_T.$ 
It is  easy to compute the co-efficients of the curvature $\mathscr{K}_{T}(\bar{z})$ explicitly using the metric $K(z,z)$ for $m=1, n=1,$ namely, 
 $$ \mathcal{K}^{i,j}_T(\bar{z}) = -\frac{\partial ^2}{\partial w_i \partial \bar{w}_j} \mbox{log} K(w,w)|_{w=z} =    -\frac{\norm{K_{z}}^2 \langle \bar{\partial}_i K_z , \bar{\partial_j}K_{z} \rangle - {\langle K_{z}, \bar{\partial_i}K_{z} \rangle} {\langle \bar{\partial_j}K_{z}, K_{z}\rangle}}{(K({z},{z}))^2},\;\;z \in \Omega. $$
Set $\mathcal K = \big ( \!\!\big ( \mathcal K^{i,j}\big )\!\!\big )$.  

First, consider the case of $m=1.$ Assume that $\overbar{\Omega}^{*}$ is 
a spectral set for an operator $T$ in $B_1(\Omega^*),$ $\Omega \subset \mathbb C.$ 
Thus for any rational function $r$ with poles off $\overbar{\Omega}^*,$ we have 
$\|r(T)\| \leq \|r\|_{\Omega^*, \infty}.$ For such operators $T,$ the curvature inequality 
\begin{align*}
\mathcal{K}_{T} (\bar{w}) &\leq -4\pi ^2(S_{\Omega^*}(\bar{w},\bar{w}))^2,\;\;\;\bar{w}\in\Omega^*,
\end{align*}
where $S_{\Omega^*}$ is the S\"{z}ego kernel of the domain $\Omega^*,$ was established in \cite{GMCI}.
Equivalently, since $S_{\Omega}(z,w) = S_{\Omega^*}(\bar{w},\bar{z}),\,\;z,w\in \Omega,$ the curvature inequality takes the form 
\begin{align}\label{eq: alternative CI}
\frac{\partial ^2}{\partial w \partial \bar{w}}\mbox{log} K_{T}(w,w)\geq 4\pi ^2 (S_{\Omega}(w,w))^2, \;\;\;w \in \Omega.
\end{align}

Let us say that a commuting tuple of operators $\boldsymbol{T}$ in $B_n(\Omega ^*),$ $\Omega \subset \mathbb C^m,$ is \emph{contractive} if  $\overbar{\Omega}^*$ is a spectral set for $\boldsymbol{T},$ that is, $\|f(\boldsymbol T)\| \leq \|f\|_{\Omega^*, \infty}$ for all functions holomorphic in some neighborhood of $\overbar{\Omega}^*$.   

In this paper, see Theorem \ref{Generalized CI}, we generalize the curvature inequality \eqref{eq: alternative CI} for a contractive tuple of operators $\boldsymbol T$ in $B_n(\Omega ^*),$ which include the earlier inequalities from  \cite{Misra90bdmodule,Misra93Pati}.   

Let $U_+$ be the forward unilateral shift operator on $\ell^2(\mathbb N).$ The adjoint  $U_+^*$ is the backward shift operator and is in $B_1(\mathbb D).$ The shift $U_+$ is unitarily equivalent to  the multiplication operator $M$ on the Hardy space $(H^2(\mathbb{D}), ds).$ The reproducing kernel of the Hardy space is the  S\"{z}ego kernel $S_{\mathbb{D}}(z,a)$ of the unit disc $\mathbb D.$ It is given by the formula  $S_{\mathbb{D}}(z,a) = \frac{1}{2\pi(1-z\bar{a})}, \,z,a \in \mathbb{D}.$ A straightforward computation gives an explicit formula for the curvature 
$\mathcal{K}_{U_+^*} (w):$ 
\begin{align*}
\mathcal{K}_{U_+^*} (w)=- \frac{\partial ^2}{\partial w \partial \bar{w}}\mbox{log} S_{\mathbb{D} }(w,w) &= -4 \pi ^2 (S_{\mathbb{D}}(w,w))^2,\;\;\;w\in \mathbb D.
\end{align*}
Since the closed unit disc is a spectral set for any contraction $T$ (by von Neumann inequality), it follows, from equation $\eqref{eq: alternative CI},$ that the curvature of the operator $U_+^*$ dominates the curvature of every other contraction $T$ in $B_1(\mathbb D),$  
\begin{align*}
\mathcal{K}_T(w) \leq \mathcal{K}_{U_+^*}(w)= - (1-|w|^2)^{-2},\;\;w\in \mathbb{D}.
\end{align*}
Thus the operator $U_+^*$ is the \emph{extremal operator} in the class of contractions in $B_1(\mathbb D).$ 
The extremal property of the operator $U_+^*$ prompts the following question due to R. G. Douglas.  

\begin{qn}[R. G. Douglas]\label{Douglas question}
For a contraction $T$ in $B_1(\mathbb D),$ if $\mathcal K_T(w_0) =- (1-|w_0|^2)^{-2}$ for some fixed $w_0$ in $\mathbb D,$ then does it follow that $T$ must be unitarily equivalent to the operator $U_+^*$?
\end{qn}
It is known that the answer is negative, in general,  however it has an affirmative answer if, for instance, $T$ is a homogeneous contraction in $B_1(\mathbb D),$ see \cite{GMback}. From the simple observation that $\mathcal{K}_T(\bar{\zeta})  = - (1-|\zeta|^2)^{-2}$ for some $\zeta\in \mathbb D$ if and only if the two vectors $\tilde{K}_{\zeta}$ and $\bar{\partial}\tilde{K}_{\zeta}$ are linearly dependent, where $\tilde{K}_w(z) = (1-z\bar{w}) K_w(z),$  it follows that the question of Douglas has an affirmative answer in the class of contractive, co-hyponormal backward weighted shifts. Furthermore, we obtain an affirmative answer if $T^*$ is assumed to be a $2$ hyper-contraction and that $1$ is a cyclic vector for ${\phi(T)}^*,$ where $\phi$ is the bi-holomorphic automorphism of $\mathbb D$ mapping $\zeta$ to $0.$  This is Theorem \ref{uniqueness in unit disc} of this paper. 

If the domain $\Omega$ is not simply connected, it is not known if there exists an extremal operator $T$ in $B_1(\Omega^*)$, that is, if  
$$\frac{\partial ^2}{\partial w \partial \bar{w}}\mbox{log} K_{T}(w,w)= 4\pi ^2 (S_{\Omega}(w,w))^2,\,\,w\in\Omega,$$ for some $T$ in  $B_1(\Omega^*)$. Indeed, from a result of Suita (cf. \cite{SUITA}), it follows that the adjoint of the multiplication operator on the Hardy space $H^2(\Omega)$ is not extremal. It was shown in \cite{GMCI} that for any fixed but arbitrary $w_0\in \Omega,$ there exists an operator $T$ in $B_1(\Omega^*)$  for which equality is achieved, at  $w=w_0,$ in the inequality \eqref{eq: alternative CI}. The question of the uniqueness of such an operator, which is the Douglas question, was partially answered recently by the second named author in \cite{MRuniqueness}. The precise result is that these ``point-wise'' extremal operators are determined uniquely within the class of the adjoint of the bundle shifts introduced in \cite{ABsubnormal}. It was also shown in the same paper that each of these bundle shifts can be realized as  a multiplication operator on a Hilbert space of weighted Hardy space and conversely. Generalizing these results, in this paper, we prove that  the local extremal operators are uniquely determined in a much larger class of operators, namely, the ones that includes all the weighted Bergman spaces along with the weighted Hardy spaces defined on $\Omega.$
This is Theorem \ref{uniqueness in finitely connected}. The authors have obtained some preliminary results in the multi-variate case which are not included here.


\section{Local operators and Generalized Curvature Inequality }

Let $\Omega$ be a bounded domain in $\mathbb{C}^m$ and $\pmb{T} = (T_1,T_2,\ldots,T_m)$ be a commuting $m$-tuple of  bounded operators on some separable complex Hilbert space $\mathcal{H}.$ Assume that the tuple of operator $\pmb T$ is in $B_n(\Omega^*).$ For an arbitrary but fixed point $w\in \Omega^*,$ let 
\begin{align}\label{eq:1.1a}
\mathcal{M}_w &=\bigcap \limits _{i,j=1}^{m}\;\;\ker(T_i-w_i)(T_j-w_j).
\end{align}
Clearly, the joint kernel $\cap_{i=1}^m \ker(T_i-w_i)$ is a subspace of $\mathcal{M}_w.$  Fix a holomorphic frame $\pmb{\gamma},$ defined on some neighborhood of $w,$ say $\Omega_0^* \subseteq \Omega^*,$  of the vector bundle $E_{\pmb T}.$ Thus $\pmb{\gamma}(z) =(\gamma_1(z), \ldots, \gamma_n(z)),$ for $z$ in $\Omega_0^*,$  for some choice $\gamma_i(z),$  $i = 1,2,\ldots,n,$ of joint eigenvectors, that is, $(T_j-z_j)\gamma_i(z) =0,$ $j = 1,2,\ldots,m.$ It follows that 
\begin{equation}\label{eq:action of N_j} 
 (T_j-w_j)(\partial_k\gamma_i(w)) = \gamma_i(w)\delta_{j,k},\;i = 1,2,\ldots,n, \;\;\mbox{and}\;\;j,k = 1,\ldots, m.
\end{equation}
The eigenvectors $\pmb{\gamma}(w)$ together with their derivatives, that is $(\partial_1 \pmb{\gamma}(w),\ldots,\partial_m \pmb{\gamma}(w))$ is a basis for the subspace $\mathcal M_w.$  

The metric of the bundle $E_{\pmb T}$ at $z\in \Omega^*_0$ w.r.t the frame $\pmb{\gamma}$ has the matrix representation
\begin{align*}
h_{\pmb{\gamma}}(z) &= \big(\!\!
           \big( \langle \gamma_j(z), \gamma_i(z) \rangle \big)
          \!\!\big)_{i,j = 1}^{n}.
\end{align*}
Clearly, $\pmb{\tilde{\gamma}}(z) = (\gamma_1(z), \ldots ,\gamma_n(z))h_{\pmb{\gamma}}(w)^{-1/2}$ is also a holomorphic frame for $E_{\pmb T}$ with the additional 
property that $\pmb{\tilde{\gamma}}$ is orthonormal at $w,$  that is, $h_{\pmb{\tilde{\gamma}}}(w)= I_n.$
We therefore assume, without loss of generality, that $h_{\pmb{\gamma}}(w)=I_n.$

In what follows, we always assume that we have made a fixed but arbitrary choice of a local holomorphic frame $\pmb{\gamma}(z) = (\gamma_1(z),\ldots,\gamma_n(z))$ defined on a small neighborhood of $w,$ say $\Omega_0^* \subseteq \Omega,$ 
such that $h_{\pmb{\gamma}}(w)=I_n.$

Recall that the local operator  $\pmb N_w = (N_1(w),\ldots,N_m(w))$ is the commuting $m$-tuple of nilpotent operators on the subspace $\mathcal M_w$ defined by $N_i(w) = (T_i-w_i)\mid _{\mathcal{M}_w}$. As a first step in relating the operator $\pmb T$ to the vector bundle $E_{\pmb T},$ pick a holomorphic frame $\pmb {\gamma},$ satisfying $h_{\pmb{\gamma}}(w)=I_n,$ for the holomorphic Hermitian vector bundle $E_{\pmb T}$ which also serves as a basis for the joint kernel of $\pmb T.$ We extend this basis to a basis of $\mathcal M_w.$  In the following proposition, we determine a natural orthonormal basis in $\mathcal M_w$ such that the curvature of the vector bundle $E_{\pmb T}$ appears in the matrix representation (obtained with respect to this orthonormal basis) of $\pmb N_w.$


\begin{prop} \label{prop1.1}
There exists an orthonormal basis in the subspace $\mathcal M_w$ such that the matrix representation of $N_l(w)$ with respect to this basis is of the form 
\begin{align*}
N_l(w) = \begin{pmatrix}
0_{n\times n} &  \pmb t_l(w)\\
0_{mn \times n} & 0_{mn \times mn}
\end{pmatrix},\,\,\begin{psmallmatrix}
\mathbf t_1(w)\\
\vdots\\
\mathbf t_m(w)
\end{psmallmatrix}
\begin{pmatrix} \overline{\mathbf t_1(w)}^t, \ldots, \overline{\mathbf t_m(w)}^t\end{pmatrix} =
 \pmb t(w) \overline{\pmb t(w)}^{\rm tr}= 
 -(\mathcal{K}_{\pmb{\gamma}}(w))^{-1},
\end{align*}
where ${\pmb{\gamma}}$ is a frame of $E_{\pmb T}$ defined in a neighborhood of $w$ which is orthonormal at the point $w$ and $\mathcal K_{{\pmb{\gamma}}}(z)= \big(\!\!\big (\mathcal{K}_{i,j}({\pmb{\gamma}})(z)\big )\!\!\big).$ 
\end{prop}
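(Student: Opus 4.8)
The plan is to produce the required orthonormal basis in two stages: first pass to a holomorphic frame that is adapted (``normal to first order'') at $w$, and then orthonormalise the derivatives of that frame. To begin, I would replace $\pmb\gamma$ by $\pmb\gamma\,g$, where $g$ is the matrix polynomial $g(z)=I_n-\sum_{i=1}^{m}\big(\partial_i h_{\pmb\gamma}(w)\big)(z_i-w_i)$. This $g$ is holomorphic with $g(w)=I_n$, and a direct computation using $h_{\pmb\gamma g}=g^{*}h_{\pmb\gamma}g$ (note $g^{*}$ is antiholomorphic, so $\partial_i g^{*}=0$) gives $\partial_i h_{\pmb\gamma g}(w)=0$ for every $i$. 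Since $g(w)=I_n$, the fibre at $w$ and hence the matrix $\mathcal K_{\pmb\gamma}(w)$ are unchanged, so I may assume from the outset that $h_{\pmb\gamma}(w)=I_n$ and $\partial_i h_{\pmb\gamma}(w)=0$ for all $i$.

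This normalization has two consequences that I would extract next. First, $\partial_i h_{\pmb\gamma}(w)=0$ says precisely that $\langle \partial_i\gamma_q(w),\gamma_p(w)\rangle=0$, i.e. all derivatives $\partial_k\gamma_i(w)$ are orthogonal to the joint kernel $\ker\pmb N_w=\mathrm{span}\{\gamma_1(w),\dots,\gamma_n(w)\}$. Second, in $\mathcal K^{i,j}(\pmb\gamma)=-\bar\partial_j\big(h_{\pmb\gamma}^{-1}\partial_i h_{\pmb\gamma}\big)$ the term carrying $\partial_i h_{\pmb\gamma}$ drops out at $w$, leaving the clean formula $\mathcal K^{i,j}(\pmb\gamma)(w)_{pq}=-\langle\partial_i\gamma_q(w),\partial_j\gamma_p(w)\rangle$. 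Writing $G$ for the Gram matrix of the $mn$ vectors $\{\partial_k\gamma_i(w)\}$, this identifies $\mathcal K_{\pmb\gamma}(w)$, up to conjugation, with $-G$; and since $\{\gamma_i(w)\}\cup\{\partial_k\gamma_i(w)\}$ is a basis of $\mathcal M_w$, the derivatives are linearly independent, so $G\succ 0$ and $\mathcal K_{\pmb\gamma}(w)$ is invertible.

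With this in hand I would build the basis. Keep the orthonormal set $\{\gamma_1(w),\dots,\gamma_n(w)\}$ and orthonormalise the derivatives by setting $f_\beta=\sum_{k,i}A_{(k,i),\beta}\,\partial_k\gamma_i(w)$ for a suitable transition matrix $A$ (for instance $A=\overline G^{-1/2}$); by the previous step these $f_\beta$ are orthogonal to the kernel, so together they form an orthonormal basis of $\mathcal M_w$. Using $N_l(w)\gamma_i(w)=0$ together with the relation \eqref{eq:action of N_j}, namely $N_l(w)\partial_k\gamma_i(w)=\delta_{lk}\gamma_i(w)$, I compute $N_l(w)f_\beta=\sum_i A_{(l,i),\beta}\gamma_i(w)$, which lies in the kernel. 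Hence, in the ordered orthonormal basis (kernel block first), $N_l(w)$ automatically takes the block shape $\big(\begin{smallmatrix}0&\pmb t_l(w)\\0&0\end{smallmatrix}\big)$ with $(\pmb t_l(w))_{i\beta}=A_{(l,i),\beta}$, that is, $\pmb t(w)=A$.

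Finally, orthonormality of $\{f_\beta\}$ reads $A^{\mathrm{tr}}G\,\overline A=I$, which forces $\pmb t(w)\,\overline{\pmb t(w)}^{\mathrm{tr}}=A\,\overline A^{\mathrm{tr}}=\overline G^{\,-1}$, independently of the particular choice of $A$; combining this with $\overline G=-\mathcal K_{\pmb\gamma}(w)$ from the second step yields $\pmb t(w)\,\overline{\pmb t(w)}^{\mathrm{tr}}=-(\mathcal K_{\pmb\gamma}(w))^{-1}$. I expect the main obstacle to be bookkeeping: reconciling the index layout of the block matrix $\big(\!\big(\mathcal K^{i,j}\big)\!\big)$ (block positions recording the derivative directions $i,j$, entries recording the frame components $p,q$) with that of the Gram matrix of the mixed derivatives, so that the complex conjugation and the interchange of the two arguments of $\langle\cdot,\cdot\rangle$ cancel to give exactly $-\mathcal K$ rather than a transpose of it. This is precisely the point at which the inner-product convention built into the definition of $h_{\pmb\gamma}$ must be tracked consistently; everything else is forced once $G\succ 0$ guarantees the relevant square roots and inverses exist.
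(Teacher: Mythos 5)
Your proposal is correct, and it reaches the key identity $\pmb t(w)\,\overline{\pmb t(w)}^{\rm tr}=-(\mathcal K_{\pmb\gamma}(w))^{-1}$ by a genuinely different device than the paper. The paper keeps an arbitrary holomorphic frame that is merely orthonormal at $w$ (so the first derivatives $\partial_i h_{\pmb\gamma}(w)$ need not vanish), runs Gram--Schmidt on the full basis $\{\gamma_q(w)\}\cup\{\partial_k\gamma_q(w)\}$ of $\mathcal M_w$, identifies $\pmb t(w)\,\overline{\pmb t(w)}^{\rm tr}$ with the lower-right $mn\times mn$ block $R$ of the inverse Grammian $G^{-1}$ by computing $[N_l(w)N_r(w)^*]$ in the new basis, and finally evaluates $R^{-1}$ as the Schur complement $S-Lh_{\pmb\gamma}(w)^{-1}X$, which equals $-\mathcal K_{\pmb\gamma}(w)$. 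You instead normalize the frame to first order before doing anything else: the holomorphic change of frame $g(z)=I_n-\sum_i(\partial_i h_{\pmb\gamma}(w))(z_i-w_i)$ kills the first derivatives of the metric at $w$ while leaving the fibre and the curvature matrix at $w$ unchanged (curvature is tensorial and $g(w)=I_n$); after that the derivative vectors are orthogonal to the joint kernel, the curvature at $w$ collapses to minus the Gram matrix of the derivative vectors alone, and only a block orthonormalization of those $mn$ vectors is needed. The two mechanisms encode the same fact, since the Schur complement $S-Lh^{-1}X$ is exactly the Gram matrix of the derivative vectors after projecting off the kernel directions, which is what your normal frame achieves geometrically; your route makes the appearance of the curvature transparent and avoids inverting the full $(m+1)n\times(m+1)n$ Grammian, while the paper's route never modifies the frame and exhibits the block structure of $G^{-1}$ explicitly. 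One caution on the bookkeeping you flag: under the paper's convention $G=\big(\langle v_j,v_i\rangle\big)$, orthonormality of $f_\beta=\sum_\alpha A_{\alpha\beta}v_\alpha$ reads $\overline{A}^{\rm tr}GA=I$ rather than $A^{\rm tr}G\overline{A}=I$, so the conjugations in your final two displays need to be tracked once more; this is a convention issue rather than a gap, and the paper itself glosses over the analogous point, since its Schur computation literally produces the block matrix $-\big(\mathcal K^{j,i}(\pmb\gamma)(w)\big)$, the block transpose of $\mathcal K_{\pmb\gamma}(w)$, which it silently identifies with $-\mathcal K_{\pmb\gamma}(w)$.
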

\begin{proof} 
For any $k= (p-1) n + q,$ $1 \leq p \leq m+1,$ and $1 \leq q \leq n$, set $v_k := \partial_{p-1}(\gamma_{q}(w))$ and $\mathbf v_i:= \big ( v_{(i-1)n +1}, \ldots ,  v_{(i-1)n + n} \big ).$ Thus $\mathbf v_i$ is also $\partial_{i-1}\pmb{\gamma},$
where $\pmb{\gamma} = (\gamma_1, \ldots , \gamma_n).$
Hence the set of vectors $\{v_k,$ $1 \leq k \leq (m+1)n\}$ forms a basis of the subspace $\mathcal M_w.$  Let $P$ be an invertible matrix of size $(m+1)n\times (m+1)n$ and $$(\mathbf u_1, \ldots , \mathbf u_{m+1}) := (\mathbf v_1, \ldots , \mathbf v_{m+1} )\begin{pmatrix}
 P_{1,1} & P_{1,2} & \ldots & P_{1,m+1}\\
 P_{2,1} & P_{2,2} & \ldots & P_{2,m+1}\\
 \vdots & \vdots & \ddots & \vdots \\
 P_{m+1,1} & P_{m+1,2} & \ldots & P_{m+1,m+1}
 \end{pmatrix},$$ 
where each $P_{i,j}$ is a $n\times n$ matrix. 
Clearly, $(\mathbf u_1, \ldots ,\mathbf u_{m+1})$ is a basis, not necessarily orthonormal, in the subspace $\mathcal M_w.$ 
The set of vectors $\{\mathbf{u}=(\mathbf u_1, \ldots \mathbf u_{m+1})\}$ is an orthonormal basis in $\mathcal{M}_w$ if and only if $P\bar{P}^t = G^{-1},$ where $G$ is the $(m+1)n\times (m+1)n,$ grammian $\big(\!\!\big( \langle v_j,v_i\rangle\big)\!\!\big),$ that is, 
\begin{align*}
 G &=  \begin{pmatrix}
      h_{\pmb{\gamma}}(w) & \partial_1 h_{\pmb{\gamma}}(w) & \ldots & \partial_m h_{\pmb{\gamma}}(w)\\
      \bar{\partial}_1 h_{\pmb{\gamma}}(w) &\bar{\partial}_1\partial_1 h_{\pmb{\gamma}}(w) & \ldots & \bar{\partial}_1\partial_m h_{\pmb{\gamma}}(w) \\
      \vdots  & \vdots  & \ddots & \vdots  \\
      \bar{\partial}_m h_{\pmb{\gamma}}(w) & \bar{\partial}_m\partial_1 h_{\pmb{\gamma}}(w) & \ldots & \bar{\partial}_m\partial_m h_{\pmb{\gamma}}(w)
      \end{pmatrix}.
\end{align*}  
In particular, we choose and fix  $P$ to be the upper triangular matrix corresponding to the Gram-Schmidt orthogonalization process. 
Following equation $\eqref{eq:action of N_j},$ the matrix representation of $N_l(w)$ w.r.t. the basis $\mathbf{v}= (\mathbf v_1, \ldots , \mathbf v_{m+1})$ is $[N_l(w)]_{\mathbf{v}} = \big (\!\! \big ( N_l(w)_{i j} \big ) \!\!\big ),$ $l= 1,2,\ldots ,m,$ where 
\begin{align*}
N_l(w)_{i j} = \begin{cases}
0_{n \times n} & (i,j) \not = (1, l+1)\\
I_{n} & (i,j) = (1,l+1) 
\end{cases}, \,\, 1\leq i, j \leq m+1.
\end{align*}

%
Therefore  w.r.t the orthonormal basis $(\mathbf u_1,\ldots, \mathbf u_{m+1}),$ the matrix of $N_l$ is of the form
\begin{align}\label{matrix of NL}
\begin{bmatrix}
N_l(w)
\end{bmatrix} _{\mathbf{u}} 
&=\begin{pmatrix}
                 0_{n \times n} & {t}_l ^{1}(w) & \ldots & {t}_l ^{m}(w)\\
                 0_{n \times n} & 0_{n \times n} & \ldots &  0_{n \times n}\\
                 \vdots & \vdots  & \ddots & \vdots\\
                  0_{n \times n}  & 0_{n \times n} & \ldots  & 0_{n \times n}
                \end{pmatrix}
                = \begin{pmatrix}
                0_{n \times n} & \mathbf{t}_l(w) \\
                0_{mn \times n} & 0_{mn \times mn}
                  \end{pmatrix},
\end{align}
where each ${t}_l^{i}(w)$ is a square matrix of size $n,$ for $l,i = 1,2,\ldots ,m$ and $\pmb t_l(w)$ is a $n \times mn$ rectangular matrix. It is now evident that for $l,r = 1,2,\ldots,m$, we have
\begin{align*}
\begin{bmatrix}
N_l(w) N_r(w) ^*
\end{bmatrix} _{\mathbf{u}} &= Q \begin{bmatrix}
                    N_l(w)
                    \end{bmatrix}_{\mathbf{v}} G^{-1}\begin{bmatrix}
                    N_r(w)
                    \end{bmatrix}_{\mathbf{v}} \overbar{Q}^t, 
\end{align*}
where $Q=P^{-1}.$ To continue, we write the matrix $G^{-1}$ in the form of a block matrix:
\begin{align} \label{eq: gram}
G^{-1} &= \begin{pmatrix}
         *_{n \times n} & *_{n \times n} & *_{n \times n} & \ldots & *_{n\times n}\\
           *_{n \times n} & R_{1,1} & R_{1,2} & \ldots & R_{1,m}\\
           *_{n \times n} & R_{2,1} & R_{2,2} & \ldots & R_{2,m}\\
           \vdots & \vdots & \vdots & \ddots & \vdots\\
           *_{n \times n} & R_{m,1} & R_{m,2} & \ldots & R_{m,m}
          \end{pmatrix}
          =\left (  \begin{array}{l l}
            *_{n \times n} & *_{n \times mn}\\
            *_{mn \times n} & R
            \end{array} \right ),
          \end{align} 
\renewcommand\arraystretch{1.0}
where each $R_{i,j}$ is a $n \times n$ matrix. Then we have 
 \begin{align*}
 \begin{bmatrix}
N_l(w) N_r(w) ^*
\end{bmatrix} _{\mathbf{u}}                 
%
                   &= \begin{pmatrix}
                       Q_{1,1}R_{l,r}{\overbar{Q}^t_{1,1}} & 0_{n \times mn}\\
                       0_{mn \times n} & 0_{mn \times mn}
                      \end{pmatrix} .    
\end{align*}
Since $P$ is upper triangular with $P_{1,1}=I_n,$ we have $\mathbf u_1 = \mathbf v_1 P_{1,1}= \mathbf v_1,$  that is, 
\begin{eqnarray*}
(u_1,u_2,\ldots , u_n) &=& (v_1,v_2,\ldots, v_n).
\end{eqnarray*}

As $P_{1,1}= I_n,$ we have $Q_{1,1} = I_n.$ Hence w.r.t the orthonormal basis $(\mathbf u_1,\ldots,\mathbf u_{m+1})$ of the subspace $\mathcal{M}_w$, the linear transformation $N_l(w)N_r(w)^*$ has the matrix representation 
 \begin{align}
[N_l(w)N_r(w)^*]_{\mathbf{u}} 
               &= \begin{pmatrix}
                  R_{l,a} & 0_{n \times mn}\\
                  0_{mn \times n} & 0_{mn \times mn}
                  \end{pmatrix}. \label{eq:1j}
 \end{align}
 Let $\pmb t(w)$ be the $mn \times mn$ matrix given by 
\begin{align*}
\pmb t(w)=\begin{psmallmatrix}
\mathbf t_1(w)\\
\mathbf t_2(w)\\
\vdots\\
\mathbf t_m(w)
\end{psmallmatrix}.
\end{align*} 
Now combining equation \eqref{matrix of NL} and equation \eqref{eq:1j}, we then have 
\begin{align}\label{tw and its adjoint}
\pmb t(w) \overline{\pmb t(w)}^{\rm tr} = R. 
\end{align}
To complete the proof, we have to relate the block matrix $R$  to the curvature matrix $\mathcal{K}_{\pmb{\gamma}}(w)$ w.r.t the frame $\pmb{\gamma}$. Recalling $\eqref{eq: gram},$ we have that 
\begin{align*}
G^{-1} &= \begin{pmatrix}
            *_{n \times n} & *_{n \times mn}\\
            *_{mn \times n} & R
            \end{pmatrix}.
\end{align*}

The Grammian $G$ admits a natural decomposition as a  $2 \times 2$ block matrix, namely, 
\begin{eqnarray*}
 G &=&  \begin{pmatrix}
      h_{\pmb{\gamma}}(w) & \partial_1 h_{\pmb{\gamma}}(w) & \ldots & \partial_m h_{\pmb{\gamma}}(w)\\
      \bar{\partial}_1 h_{\pmb{\gamma}}(w) &\bar{\partial}_1\partial_1 h_{\pmb{\gamma}}(w) & \ldots & \bar{\partial}_1\partial_m h_{\pmb{\gamma}}(w) \\
      \vdots  & \vdots  & \ddots & \vdots  \\
      \bar{\partial}_m h_{\pmb{\gamma}}(w) & \bar{\partial}_m\partial_1 h_{\pmb{\gamma}}(w) & \ldots & \bar{\partial}_m\partial_m h_{\pmb{\gamma}}(w)
      \end{pmatrix}
 = \begin{pmatrix}
       h_{\pmb{\gamma}}(w) & X_{n\times mn}\\
       L_{mn\times n} & S_{mn\times mn}
      \end{pmatrix}.      
\end{eqnarray*}
Computing the $2\times 2$ entry of the inverse of this  block matrix and equating it to $R,$ we have 
\begin{eqnarray*}
 R^{-1} &=& S - L h_{\pmb{\gamma}}(w)^{-1} X \\
        &=& \big(\!\!\big(\bar{\partial}_i\partial_j h_{\pmb{\gamma}}(w)\big)\!\!\big)_{i,j=1}^{m} - \big(\!\!\big((\bar{\partial}_i {h_{\pmb{\gamma}}(w)}) h_{\pmb{\gamma}}(w)^{-1}(\partial_j h_{\pmb{\gamma}}(w))\big)\!\!\big)_{i,j =1}^{m} \\
        &=& \big(\!\!\big(h_{\pmb{\gamma}}(w) \bar{\partial}_i{(h_{\pmb{\gamma}}(w)}^{-1}\partial_j h_{\pmb{\gamma}}(w))\big)\!\!\big) \\
        &=& -\big(\!\!\big(h_{\pmb{\gamma}}(w) \mathcal{K}^{i,j}(\pmb{\gamma})(w)\big)\!\!\big) \\
        &=& -\mathcal{K}_{\pmb{\gamma}} (w),
 \end{eqnarray*}
 where $\mathcal{K}^{i,j}(\pmb{\gamma})(w)$ denote the matrix of the curvature $\mathcal{K}$ at $w\in \Omega^*_0$ w.r.t the frame $\pmb{\gamma}$ of the bundle $E_{\pmb T}$ on $\Omega^*_0$  and $\mathcal{K}_{\pmb{\gamma}(w)} = \big(\!\!\big(\mathcal{K}^{i,j}(\pmb{\gamma})(w)\big)\!\!\big)_{i,j =1}^{m}$. Also, by our choice of the frame $\pmb{\gamma}$ we have $h_{\pmb{\gamma}}(w)=I_n.$  Hence it follows that
 \begin{eqnarray}
 \pmb t(w) \overline{\pmb t(w)}^{\rm tr} = R &=& \big(-\mathcal{K}_{\pmb{\gamma}}(w)\big)^{-1}. \label{eq: RK}
 \end{eqnarray}
 This completes the proof.

\end{proof}


The matrix representation of the operator ${T_i}_{|{\mathcal{M}_w}}$  w.r.t. the orthonormal basis $\mathbf{u}= (\mathbf u_1, \ldots , \mathbf u_{m+1})$ in the subspace $\mathcal{M}_w$ is of the form 
\begin{align*}
\begin{bmatrix}
{T_i}_{|{\mathcal{M}_w}}
\end{bmatrix} _{\mathbf{u}} = \begin{pmatrix}
w_i I_n &  \pmb t_i(w)\\
0_{mn \times n} & w_i I_{mn}
\end{pmatrix},\,\,i=1,\ldots,m.
\end{align*}
Now assuming that the joint spectrum of the tuple $\pmb T$ is contained in $\overbar{\Omega}^*,$ it follows that for any  function $f\in \mathcal O(\overbar{\Omega}^*),$ we have 
\begin{eqnarray*}
f(\pmb T)_{|\mathcal{M}_w} &=& f\big ( \pmb T_{| \mathcal{M}_w} \big )\\
&=&  \begin{pmatrix} f(w) & \bigtriangledown f(w) \cdot  \pmb t(w) \\ 
0 & f(w) \end{pmatrix} = f(\pmb T_w),\\
\end{eqnarray*}
where $\pmb T_w$ is the $m$ tuple of operator $\pmb T_{| \mathcal{M}_w} $ and 
\begin{align*}
\bigtriangledown f(w) \cdot  \pmb t(w) &= \partial_1f(w) \mathbf t_1(w) + \cdots + \partial_m f(w) \mathbf t_m(w)\\
&= \big((\partial_1f(w))I_n,\ldots,(\partial_mf(w)) I_n\big) (\pmb t(w))\\
&= (I_n \otimes \bigtriangledown f(w) ) (\pmb t(w)).
\end{align*}
From equation \eqref{eq: RK}, we also have 
 \begin{align}\label{tw and curvature}
 \pmb t(w) \overline{\pmb t(w)}^{\rm tr}= -(\mathcal{K}_{\pmb{\gamma}}(w))^{-1}.
 \end{align}
As an application, it is easy to obtain a curvature inequality for those commuting tuples of operators $\pmb T$ in the Cowen-Douglas class $B_n(\Omega^*)$ which admit $\overbar{\Omega}^*$ as a spectral set. This is easily done via the holomorphic functional calculus. 

If $\pmb T$ admits $\overbar{\Omega}^*$ as a spectral set, then the inequality $I - f(\pmb T_w)^{*}f(\pmb T_w)\geq 0$ is evident for all holomorphic functions mapping $\overbar{\Omega}^*$ to the unit disc $\mathbb D.$ As is well-known, we may assume without loss of generality that $f(w)=0.$  Consequently, the inequality $I - f(\pmb T_w)^{*}f(\pmb T_w)\geq 0$ with $f(w)=0$ is equivalent to 
\begin{align}\label{Gen CI}
\bigg(\overline{I_n \otimes \bigtriangledown f(w)}^{\rm tr} \bigg) (I_n \otimes \bigtriangledown f(w)) \leq -(\mathcal{K}_{\tilde{\pmb{\gamma}}}(w)).
\end{align}

Let $V\in \mathbb C^{mn}$ be a vector of the form
\begin{align*}
V&=\begin{psmallmatrix}
V_1\\
\cdot\\
\cdot\\
\cdot\\
V_m
\end{psmallmatrix}, 
\text{\,\,where\,\,}
V_i=\begin{psmallmatrix}
V_i(1)\\
\cdot\\
\cdot\\
\cdot\\
V_i(n)
\end{psmallmatrix}\in \mathbb C^n.
\end{align*}
The Carath\'{e}odory norm of the (matricial) tangent vector $V \in \mathbb C^{mn}$ at a point $z$ in $\Omega,$ is defined by 
\begin{align*}
(C_{\Omega,z}(V))^2&= \sup \big\{ \langle \bigg(\overline{I_n \otimes \bigtriangledown f(z)}^{\rm tr} \bigg)(I_n \otimes \bigtriangledown f(z))V ,V \rangle: f\in \mathcal O(\overbar{\Omega}),\|f\|_{\infty} \leq 1, f(z)=0\big \}\\
&= \sup \big\{\sum_{i,j=1}^m\overbar{\partial_if(z)}\partial_jf(z)\langle V_j,V_i\rangle : f\in \mathcal O(\overbar{\Omega}),\|f\|_{\infty} \leq 1, f(z)=0\big \} \\
&= \sup \big\{\|\sum_{j=1}^m \partial_jf(z) V_j\|_{\ell^2}^2 : f\in \mathcal O(\overbar{\Omega}),\|f\|_{\infty} \leq 1, f(z)=0\big \}.
\end{align*}

Now we compute the Carath\'{e}odory norm of the tangent vector $V \in \mathbb C^{mn}$ in the case of Euclidean ball $\mathbb B^m$ and of polydisc $\mathbb D^m.$ For a self map $g = (g_1,g_2,\ldots,g_m): \Omega \rightarrow \Omega$ and 
\begin{align*}
V&=\begin{psmallmatrix}
V_1\\
\cdot\\
\cdot\\
\cdot\\
V_m
\end{psmallmatrix},
\end{align*} let $g_*(z)(V)$ be the vector defined by 
 \begin{align*}
g_*(z)(V)&=\begin{psmallmatrix}
\sum_j \partial _j g_1(z) V_j\\
\cdot\\
\cdot\\
\cdot\\
\sum_j \partial _j g_m(z) V_j
\end{psmallmatrix}.
\end{align*}
From the definition of the Carath\'{e}odory norm, it follows that $C_{\Omega,g(z)}(g_*(z)(V)) \leq C_{\Omega,z}(V).$ In particular we have that $C_{\Omega,\varphi(z)}(\varphi_*(z)(V)) = C_{\Omega,z}(V)$ for any biholomorphic map $\varphi$ of $\Omega.$ The  group of biholomorphic automorphisms of both these domains $\mathbb B^m$ and $\mathbb D^m$ acts transitively. So,  it is enough to compute $C_{\Omega,0}(V),$ since there is an explicit formula relating $C_{\Omega,z}(V)$ to $C_{\Omega,0}(V),$ $\Omega=\mathbb B^m$ or $\mathbb D^m.$

It is well known that the set $\big\{ \triangledown f(0): f\in \mathcal O(\overline{\mathbb B^m}),\|f\|_{\infty} \leq 1, f(z)=0\big \}$ is equal to the Euclidean unit ball  $\mathbb B^m.$ Now for $a=(a_1,a_2,\ldots,a_m)\in \mathbb B^m,$ note that 
\begin{align*}
\|\sum_{j=1}^m a_j V_j\|_{\ell^2}^2= \sum_{i=1}^n |\sum _{j=1}^m a_j V_j(i)|^2 \leq \|a\|_{\ell^2}^2 \sum_{i=1}^n \sum_{j=1}^m |V_j(i)|^2.
\end{align*}
From this it follows that the Carath\'{e}odory norm of the tangent vector $V \in \mathbb C^{mn}$ at the point $0$ in the case of the Euclidean ball $\mathbb B^m$ is equal to the Hilbert-Schmidt norm of $V,$ that is, $ \|V\|^2_{HS}= \sum_{i=1}^n \sum_{j=1}^m |V_j(i)|^2.$ Similarly, in case of polydisc $\mathbb D^m,$ we have 
$\big\{ \triangledown f(0): f\in \mathcal O(\overline{\mathbb D^m}),\|f\|_{\infty} \leq 1, f(z)=0\big \}$ is equal to the $\ell^1$ unit ball of $\mathbb C^m.$ For $a=(a_1,a_2,\ldots,a_m):$  $\|a\|_1 < 1,$  we note that 
\begin{align*}
\|\sum_{j=1}^m a_j V_j\|_{\ell^2} \leq \|a\|_{\ell^1} \max_{j} \|V_j\|_{\ell^2}.
\end{align*}
Thus we conclude  that the Carath\'{e}odory norm of the tangent vector $V \in \mathbb C^{mn}$ at the point $0,$ in the case of the polydisc $\mathbb D^m,$ is equal to $\max\{\|V_j\|_{\ell^2}: 1 \leq j \leq m\}.$ A more detailed discussion on such matricial tangent vectors $V$ and the question ontractivity, complete contractivity of the homomorphism induced by them appears in \cite{Misra93Pati}.

Thus from the definition of the  Carath\'{e}odory norm and  equation \eqref{Gen CI}, a proof of the  theorem below follows.
\begin{thm}\label{Generalized CI}
Let $\pmb T$ be a commuting tuple of operator in $B_n(\Omega)$ admitting $\overbar{\Omega}^*$ as a spectral set. Then for an arbitrary but fixed point $w\in \overbar{\Omega}^*,$ there exist a frame $\pmb{\gamma}$ of the bundle $E_{\pmb T},$ defined in a neighborhood of $w,$ which is orthonormal at $w,$ so that following inequality holds
$$\langle \mathcal{K}_{\pmb{\gamma}}(w)V,V\rangle \leq - (C_{\Omega^*,w}(V))^2\,\,\,\text{for every \,}\,V\in \mathbb C^{mn} .$$
\end{thm}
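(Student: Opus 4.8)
The plan is to obtain the stated inequality as a pointwise supremum of the matrix inequality \eqref{Gen CI} over a suitable family of test functions, the key observation being that this supremum is exactly the Carath\'eodory norm. Essentially all of the analytic work has already been carried out: Proposition \ref{prop1.1} supplies the identity $\pmb t(w)\,\overline{\pmb t(w)}^{\mathrm{tr}} = -(\mathcal{K}_{\pmb{\gamma}}(w))^{-1}$, and the functional-calculus computation above expresses the compression $f(\pmb T_w)$ in terms of $\nabla f(w)$ and $\pmb t(w)$. What remains is to package these into the Carath\'eodory estimate.

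First I would record that $\mathcal{M}_w$ is invariant under $\pmb T$: for $v \in \mathcal{M}_w$ and any $k$, commutativity gives $(T_i-w_i)(T_j-w_j)T_k v = T_k (T_i-w_i)(T_j-w_j)v = 0$, so $T_k v \in \mathcal{M}_w$. Consequently the holomorphic functional calculus satisfies $f(\pmb T)|_{\mathcal{M}_w} = f(\pmb T_w)$, and restriction to an invariant subspace cannot increase the operator norm. Hence the spectral-set hypothesis $\|f(\pmb T)\| \le \|f\|_{\Omega^*,\infty}$ forces $I - f(\pmb T_w)^{*}f(\pmb T_w) \ge 0$ for every $f$ holomorphic on a neighborhood of $\overbar{\Omega}^{*}$ with $\|f\|_{\infty} \le 1$. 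Using a M\"obius composition $(f-a)/(1-\bar a f)$ with $a=f(w)$, which preserves the sup-norm bound and only scales $\nabla f(w)$ by the factor $(1-|a|^2)^{-1} \ge 1$, I may restrict attention to test functions with $f(w)=0$.

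Next, for such $f$ the explicit form of $f(\pmb T_w)$ makes $I - f(\pmb T_w)^{*}f(\pmb T_w)$ block diagonal, with lower block $I_{mn} - \pmb t(w)^{*}B^{*}B\,\pmb t(w)$, where $B = I_n \otimes \nabla f(w)$; positivity is therefore equivalent to $\pmb t(w)^{*}B^{*}B\,\pmb t(w) \le I_{mn}$. Since $\pmb t(w)\,\overline{\pmb t(w)}^{\mathrm{tr}} = -(\mathcal{K}_{\pmb{\gamma}}(w))^{-1}$ is invertible, so is $\pmb t(w)$; conjugating the last inequality by $\pmb t(w)^{-1}$ and rewriting $(\pmb t(w)\pmb t(w)^{*})^{-1} = -\mathcal{K}_{\pmb{\gamma}}(w)$ turns it into $B^{*}B \le -\mathcal{K}_{\pmb{\gamma}}(w)$, which is precisely \eqref{Gen CI}. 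Finally, fixing $V \in \mathbb{C}^{mn}$ and pairing \eqref{Gen CI} with $V$ gives $\langle B^{*}B V,V\rangle \le \langle -\mathcal{K}_{\pmb{\gamma}}(w)V,V\rangle$ for every admissible $f$ with $f(w)=0$; since the right-hand side is independent of $f$, taking the supremum of the left-hand side over all such $f$ --- which by definition is $(C_{\Omega^{*},w}(V))^2$ --- yields $\langle \mathcal{K}_{\pmb{\gamma}}(w)V,V\rangle \le -(C_{\Omega^{*},w}(V))^2$.

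I expect no serious obstacle, since the theorem is largely a reformulation of \eqref{Gen CI}. The one step requiring genuine care is the congruence in the previous paragraph: converting the contractivity of the compression $f(\pmb T_w)$ into a direct comparison of $B^{*}B$ with the curvature hinges on the invertibility of $\pmb t(w)$ and on the identity of Proposition \ref{prop1.1}, while the elementary but not-quite-automatic reduction to $f(w)=0$ must be justified as indicated above.
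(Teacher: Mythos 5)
Your proposal is correct and follows essentially the same route as the paper: the paper likewise passes from the spectral-set hypothesis to $I - f(\pmb T_w)^{*}f(\pmb T_w)\geq 0$ via the functional calculus on $\mathcal M_w$, reduces to $f(w)=0$, identifies this positivity with inequality \eqref{Gen CI} using $\pmb t(w)\,\overline{\pmb t(w)}^{\rm tr}=-(\mathcal K_{\pmb\gamma}(w))^{-1}$ from Proposition \ref{prop1.1}, and then takes the supremum defining the Carath\'eodory norm. The only difference is that you make explicit several steps the paper leaves implicit (the invariance of $\mathcal M_w$, the block-diagonal computation, and the conjugation by $\pmb t(w)^{-1}$), which is a faithful filling-in rather than a different argument.
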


Now we derive a curvature inequality specializing to the case of a bounded planar domains $\Omega^*.$ 
Using techniques from Sz.-Nagy  Foias model theory for contractions, Uchiyama \cite{Uchiyama90curvatures}, was the first one to prove a curvature inequality for operators in $B_n(\mathbb D).$  To obtain curvature inequalities in the case of finitely connected planar domains $\Omega,$ he considered the contractive operator $F_w(T),$ where $F_w: \Omega \to \mathbb D$ is the Ahlfors map, $F_w(w)=0,$ for some fixed but arbitrary $w\in \Omega.$ The curvature inequality then folllows from the equality $F_w^{\prime}(w)= S_{\Omega}(w,w).$ However, the inequality we obtain below follows directly from the functional calculus applied to the local operators.  More recently, K. Wang and G. Zhang (cf. \cite{Zhang16highcurvature}) have obtained a series of very interesting (higher order) curvature inequalities for operators in $B_n(\Omega).$ 

In the case of bounded finitely connected planar domain with Jordan analytic boundary  the carath\'{e}odory norm of the tangent vector $V \in \mathbb C^{n}$ at a point $z$ in $\Omega$ is given by 
\begin{align*}
(C_{\Omega,z}(V))^2&= \sup \big\{ |f^{\prime}(z)|^2 \langle V,V \rangle_{\ell^2} : f\in \mathcal O(\overbar{\Omega}),\|f\|_{\infty} \leq 1, f(z)=0\big \}\\
&= 4\pi^2 S_{\Omega}(z,z)^2 \langle V,V \rangle_{\ell^2},
\end{align*} 
 (cf. \cite[Theorem 13.1]{BELLcauchy}) where $S_{\Omega}(z,z)$ denotes the S\"{z}ego kernel for the domain $\Omega$ which satisfy
\begin{align*}
2 \pi S_{\Omega}(z,z) = \sup\{|r'(z)|: r \in \mbox{Rat}(\overbar{\Omega}),\norm{r}_{\infty} \leq 1, r(z) =0\}.
\end{align*}
In consequence, we have the following.  
\begin{thm}\label{CI plane domain}
Let $T$ be a operator in $B_n(\Omega^*)$ admitting $\overbar{\Omega}^*$ as a spectral set. Then for an arbitrary but fixed point $w\in {\Omega}^*,$ there exist a frame ${\gamma}$ of the bundle $E_{ T},$ defined on a neighborhood of $w,$ which is orthonormal at $w,$ so that the following inequality holds
$$ \mathcal{K}_{\gamma}(w) \leq - (S_{\Omega^*}(w,w))^2 I_n,\,\,\,\text{for every \,}\,V\in \mathbb C^{n} .$$
\end{thm}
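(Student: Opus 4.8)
The plan is to obtain Theorem \ref{CI plane domain} as the one-variable specialization of Theorem \ref{Generalized CI}, feeding in the explicit value of the Carath\'eodory norm for planar domains recorded just above the statement. The result is essentially a corollary, so the work is in assembling the pieces and checking that the matricial Carath\'eodory norm collapses to a scalar multiple of the identity form when $m=1$.

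First I would set $m=1$ in Theorem \ref{Generalized CI}. Since $T\in B_n(\Omega^*)$ admits $\overbar{\Omega}^*$ as a spectral set, that theorem furnishes a frame $\gamma$ of $E_T$, orthonormal at the fixed point $w$, for which
\begin{align*}
\langle \mathcal{K}_{\gamma}(w)V,V\rangle \leq -(C_{\Omega^*,w}(V))^2,\qquad V\in\mathbb C^{n}.
\end{align*}
Here $m=1$ forces $mn=n$, so the (matricial) tangent vector $V$ ranges over $\mathbb C^n$ and $\mathcal{K}_{\gamma}(w)$ is an $n\times n$ Hermitian matrix; this already gives the frame asserted in the theorem.

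Next I would substitute the planar evaluation of the Carath\'eodory norm. For a bounded finitely connected $\Omega^*$ with Jordan analytic boundary, the computation displayed immediately before the theorem gives $(C_{\Omega^*,w}(V))^2 = 4\pi^2 S_{\Omega^*}(w,w)^2\,\langle V,V\rangle_{\ell^2}$, with the constant $4\pi^2 S_{\Omega^*}(w,w)^2$ dictated by the normalization of the Szeg\H{o} kernel fixed in the introduction. The decisive input is the extremal identity $2\pi S_{\Omega^*}(w,w)=\sup\{|r'(w)|: r\in\mathrm{Rat}(\overbar{\Omega}^*),\ \|r\|_\infty\leq 1,\ r(w)=0\}$, which realizes the extremal derivative through the Ahlfors map (cf. \cite[Theorem 13.1]{BELLcauchy}). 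Inserting this into the previous display yields
\begin{align*}
\langle \mathcal{K}_{\gamma}(w)V,V\rangle \leq -4\pi^2 S_{\Omega^*}(w,w)^2\,\langle V,V\rangle_{\ell^2}
\end{align*}
for every $V\in\mathbb C^n$.

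Finally, because this scalar inequality holds for all $V$ and the right-hand side is a fixed multiple of $\langle V,V\rangle$, the Hermitian matrix $\mathcal{K}_{\gamma}(w)+4\pi^2 S_{\Omega^*}(w,w)^2 I_n$ is negative semidefinite, which is precisely the claimed operator inequality. I expect the only genuinely nontrivial ingredient to be the planar Carath\'eodory norm computation; but this is already supplied by the extremal theory of the Szeg\H{o} kernel and the Ahlfors map in the cited literature, so the remaining passage from a quadratic-form bound to the operator inequality, together with the $m=1$ specialization of Theorem \ref{Generalized CI}, is routine.
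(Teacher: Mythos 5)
Your proposal is correct and takes essentially the same route as the paper: Theorem \ref{CI plane domain} is stated there as an immediate consequence (``In consequence, we have the following'') of the $m=1$ case of Theorem \ref{Generalized CI} combined with the planar Carath\'eodory norm formula $(C_{\Omega,z}(V))^2 = 4\pi^2 S_{\Omega}(z,z)^2\langle V,V\rangle_{\ell^2}$ from \cite[Theorem 13.1]{BELLcauchy}, which is exactly your argument. Note that your conclusion retains the factor $4\pi^2$, consistent with that formula and with \eqref{eq: alternative CI}, whereas the paper's statement of the theorem drops it (evidently a normalization slip); your inequality is the sharper, intended one.
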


\section{Curvature Inequality and The case of unit disc}

Let $T$ be an operator in  $B_1(\mathbb D)$ and $\mathcal H_K$ be an associated reproducing kernel Hilbert space so that operator $T$ has been realized as $M^*$ on the Hilbert space $\mathcal H_K.$ Without loss of generality we can assume $K_{w}\neq 0$ for every $w\in \mathbb D.$  Let $w_1,\ldots,w_n$ be $n$ arbitrary points in $\mathbb D$ and $c_1,\ldots,c_n$ be arbitrary complex numbers. Using the reproducing property of $K$ and the property that $M^*(K_{w_i})= \bar{w}_iK_{w_i}$ we will have 
\begin{align*}
\|M^*(\sum_{i,j=1}^n c_iK_{w_i})\|^2 = \sum_{i,j=1}^n  w_i \bar{w}_jK(w_i,w_j)c_j\bar{c}_i\,\,,\,\,\|\sum_{i,j=1}^n c_iK_{w_i})\|^2 = (\sum_{i,j=1}^n K(w_i,w_j)c_j\bar{c}_i.
\end{align*}
Let $\tilde{K}(z,w)$  be the function $(1-z\bar{w})K(z,w),$ $z,w\in\mathbb{D}.$ Now it is easy to see that the operator $M^*$ on the Hilbert space $\mathcal H_K$ is a contraction if and only if $\tilde{K}$ is non-negative definite.

\begin{lem}\label{k and dk dependent}
Let $T$ be a contraction in $B_1(\mathbb{D})$ and $\mathcal H_K$ be an associated reproducing kernel Hilbert space. Then for an arbitrary but fixed $\zeta \in \mathbb{D},$ we have $\mathcal{K}_T(\bar{\zeta}) =  -\frac{1}{(1- |\zeta|^2)^2}$ if and only if the vectors $\tilde{K}_\zeta , \bar{\partial}\tilde{K}_{\zeta}$ are linearly dependent in the Hilbert space $\mathcal{H}_{\tilde{K}}.$
\end{lem}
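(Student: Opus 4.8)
The plan is to reduce the curvature of $T$ to a computation in the auxiliary reproducing kernel Hilbert space $\mathcal H_{\tilde K}$ and to read off the equality case from the Cauchy-Schwarz inequality. Since $T$ is a contraction, the function $\tilde K(z,w)=(1-z\bar w)K(z,w)$ is non-negative definite, so $\mathcal H_{\tilde K}$ is a genuine reproducing kernel Hilbert space; moreover $\tilde K(\zeta,\zeta)=(1-|\zeta|^2)K(\zeta,\zeta)>0$ for $\zeta\in\mathbb D$, hence $\tilde K_\zeta\neq 0$ and its derivative $\bar{\partial}\tilde K_{\zeta}$ is a well-defined vector of $\mathcal H_{\tilde K}$.

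First I would record the scalar identity $\log K(w,w)=\log\tilde K(w,w)-\log(1-|w|^2)$, valid for all $w\in\mathbb D$. Applying $\tfrac{\partial^2}{\partial w\partial\bar w}$ and using the elementary computation $\tfrac{\partial^2}{\partial w\partial\bar w}\log(1-|w|^2)=-\tfrac{1}{(1-|w|^2)^2}$, then evaluating at $w=\zeta$ and recalling $\mathcal K_T(\bar\zeta)=-\tfrac{\partial^2}{\partial w\partial\bar w}\log K(w,w)\big|_{w=\zeta}$, one obtains
\[
\mathcal K_T(\bar\zeta)=-\frac{1}{(1-|\zeta|^2)^2}-\frac{\partial^2}{\partial w\partial\bar w}\log\tilde K(w,w)\Big|_{w=\zeta}.
\]
Thus $\mathcal K_T(\bar\zeta)=-\tfrac{1}{(1-|\zeta|^2)^2}$ if and only if the remaining term $\tfrac{\partial^2}{\partial w\partial\bar w}\log\tilde K(w,w)\big|_{w=\zeta}$ vanishes.

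Next I would identify this last term with the reproducing-kernel curvature expression in $\mathcal H_{\tilde K}$. Exactly the computation that produced the displayed formula for $\mathcal K^{i,j}_T$ in the introduction, now carried out with $\tilde K$ in place of $K$ and legitimate because $\tilde K_\zeta\neq 0$, yields
\[
\frac{\partial^2}{\partial w\partial\bar w}\log\tilde K(w,w)\Big|_{w=\zeta}=\frac{\norm{\tilde K_\zeta}^2\norm{\bar{\partial}\tilde K_{\zeta}}^2-\big|\langle \bar{\partial}\tilde K_{\zeta},\tilde K_\zeta\rangle\big|^2}{\tilde K(\zeta,\zeta)^2},
\]
where all norms and inner products are those of $\mathcal H_{\tilde K}$. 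By the Cauchy-Schwarz inequality the numerator is non-negative and vanishes precisely when $\tilde K_\zeta$ and $\bar{\partial}\tilde K_{\zeta}$ are linearly dependent in $\mathcal H_{\tilde K}$. Combining this equality case with the previous paragraph establishes both implications.

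The only point needing care is the last displayed identity: one must check that $w\mapsto\tilde K_w$ and $w\mapsto\bar{\partial}\tilde K_{w}$ are strongly (anti-)holomorphic near $\zeta$, so that the scalar derivatives of $\tilde K(w,w)$ may be replaced by the inner products $\langle\bar{\partial}\tilde K_{\zeta},\tilde K_\zeta\rangle$, $\norm{\bar{\partial}\tilde K_{\zeta}}^2$, and $\norm{\tilde K_\zeta}^2=\tilde K(\zeta,\zeta)$. This is where the hypothesis $\tilde K(\zeta,\zeta)>0$ (equivalently $\tilde K_\zeta\neq 0$) enters, guaranteeing that $\log\tilde K(w,w)$ is real-analytic in a neighborhood of $\zeta$ and that differentiation under the inner product is justified; granting this, the identity is nothing but the $m=1$, $n=1$ curvature computation already recorded for $K$.
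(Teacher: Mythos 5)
Your proposal is correct and follows essentially the same route as the paper's own proof: reduce the curvature equality to the vanishing of $\tfrac{\partial^2}{\partial w\partial\bar w}\log\tilde K(w,w)$ at $\zeta$, express that quantity as the Gram determinant of $\tilde K_\zeta,\bar\partial\tilde K_\zeta$ divided by $\tilde K(\zeta,\zeta)^2$, and invoke the equality case of Cauchy--Schwarz. The only differences are expository: you make explicit the logarithm decomposition that the paper dismisses as ``straightforward to verify,'' and your displayed formula carries the correct (positive) sign, whereas the paper's has a harmless sign typo.
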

\begin{proof}
Assume $\mathcal{K}_{M^*}(\bar{\zeta}) = -\frac{1}{(1-|\zeta|^2)^2}$ for some $\zeta \in \mathbb{D}.$  Contractivity of $M^*$ gives us the function $\tilde{K}: \mathbb{D}\times \mathbb{D}\mapsto \mathbb{C}$ defined by
\begin{align*}
\tilde{K}(z,w) &= (1-z\bar{w})K(z,w)\;\;\;\;z,w\in\mathbb{D}
\end{align*}
is a non negative definite kernel function. Consequently there exist a  reproducing kernel Hilbert space $\tilde{\mathcal{H}},$ consisting  of complex valued function on $\mathbb{D}$ such that $\tilde{K}$ becomes the reproducing kernel for $\tilde{\mathcal{H}}.$ Also note that $\tilde{K}(z,z)= (1-|z|^2)K(z,z) \neq 0,\;$ for $z\in \mathbb{D}$ which gives us $\tilde{K}_z\neq 0.$  Let $\zeta$ be an arbitrary but fixed point in $\mathbb{D}.$ Now, it is straightforward to verify that $\mathcal{K}_T(\bar{\zeta}) = - \frac{1}{(1-|\zeta|^2)^2}$ if and only if  $\frac{\partial ^2}{\partial{z}\bar{\partial}{z}}\log \tilde{K}(z,z)|_{z=\zeta} = 0.$ Since we have
$$
\frac{\partial ^2}{\partial{z}\bar{\partial}{z}}\log \tilde{K}(z,z)|_{z=\zeta} = -\tfrac{\|\tilde{K}_{\zeta}\|^2 \|\bar{\partial}\tilde{K}_{\zeta}\|^2 - |{\langle \tilde{K}_{\zeta}, \bar{\partial}\tilde{K}_{\zeta} \rangle}|^2}{(\tilde{K}({\zeta},{\zeta}))^2},
$$
Using Cauchy-Schwarz inequality, we see that the proof is complete.  
\end{proof}
\begin{rem}
Two non-zero linear functional $g_1,g_2$ on a vector space are linearly dependent if and only if $\ker (g_1) = \ker( g_2).$ Since $\tilde{K}_{\zeta} \neq 0,$ there are two different possibilities for the linear dependence of the two vectors  $\tilde{K}_{\zeta} , \bar{\partial}\tilde{K}_{\zeta}.$  First,  $\bar{\partial}\tilde{K}_{\zeta} \equiv 0,$ that is, $f^{\prime}(\zeta)=\langle f,\bar{\partial}\tilde{K}_{\zeta}\rangle= 0$ for all $f\in \tilde{\mathcal H}.$ Second,  $\ker \bar{\partial}\tilde{K}_{\zeta}= \ker \tilde{K}_\zeta, $ that is,  the set $\{f \in \tilde{\mathcal{H}}\mid f ^{\prime} (\zeta) =0\}$ is equal to the set $ \{f \in \tilde{\mathcal{H}}\mid f(\zeta) =0\}$
\end{rem}
\begin{rem}\label{criterion for equality at one point}
Let $e(w) = \frac{1}{\sqrt{2}}(\tilde{K}_w \otimes \bar{\partial}\tilde{K}_w - \bar{\partial}\tilde{K}_w \otimes \tilde{K}_w)$ for $w \in \mathbb{D}.$ A straightforward computation shows that $ \|e(w)\|_{\tilde{\mathcal{H}}\otimes \tilde{\mathcal{H}}}^2 = \tilde{K}(w,w)^2 \frac{\partial ^2}{\partial{z}\bar{\partial}{z}}\log \tilde{K}(z,z)|_{z=w}.$ Now if we define 
$$F_K(z,w) := \langle e(z) , e(w)\rangle _{\tilde{\mathcal{H}}\otimes \tilde{\mathcal{H}}}\,\,\text{for}\,\, z,w\in \mathbb{D},$$ 
then clearly $F_K$ is a non negative definite kernel function on $\mathbb{D}\times\mathbb{D}.$ In view of this,  we conclude that $\mathcal{K}_T(\bar{\zeta}) = -(1- |\zeta|^2)^{-2}$ if and only if $F_K(\zeta,\zeta) = 0.$ 
\end{rem}

\begin{prop}\label{hyponormal and equality}
Let $T$ be any contractive co-hyponormal unilateral backward weighted shift operator in  $B_1(\mathbb D).$  If $\mathcal{K}_T(w_0) = -(1-|w_0|^2)^{-2}$ for some $w_0 \in \mathbb{D},$ then the operator  $T$ is unitarily equivalent to $U_+^*,$ the  backward shift operator.
\end{prop}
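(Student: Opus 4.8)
The plan is to realize $T$ concretely as $M^*$ on a weighted Hardy space and to convert both hypotheses into monotonicity statements about the Taylor coefficients of the reproducing kernel. First I would write $T=M^*$ on a reproducing kernel Hilbert space $\mathcal H_K$ whose kernel is diagonal, $K(z,w)=\sum_{n\ge 0}a_n(z\bar w)^n$ with $a_n>0$; here the monomials $z^n$ are orthogonal with $\|z^n\|^2=a_n^{-1}$, so that $M$ is the forward weighted shift with weights $\omega_n=(a_n/a_{n+1})^{1/2}$ and $T=M^*$ is the corresponding backward shift. In these terms the two hypotheses become: $T$ is contractive iff $\omega_n\le 1$ for all $n$, equivalently $(a_n)$ is nondecreasing; and $T$ is co-hyponormal, i.e. $M=T^*$ is hyponormal, iff the weight sequence $(\omega_n)$ is nondecreasing. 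I would also record $\tilde K(z,w)=(1-z\bar w)K(z,w)=\sum_{n\ge 0}b_n(z\bar w)^n$, where $b_0=a_0>0$ and $b_n=a_n-a_{n-1}$ for $n\ge 1$; nonnegative definiteness of $\tilde K$, which is precisely contractivity as noted before Lemma \ref{k and dk dependent}, is the condition $b_n\ge 0$, and $\tilde{\mathcal H}$ is spanned by those $z^n$ with $b_n>0$.

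Next I would reduce the extremality hypothesis to one transparent condition. By Lemma \ref{k and dk dependent}, the equality $\mathcal K_T(w_0)=-(1-|w_0|^2)^{-2}$ is equivalent to the linear dependence of $\tilde K_{w_0}$ and $\bar\partial\tilde K_{w_0}$ in $\tilde{\mathcal H}$. Since $\tilde K_{w_0}=\sum_n b_n\bar w_0^{\,n}z^n$ has a nonzero constant term $b_0$, while $\bar\partial\tilde K_{w_0}=\sum_{n\ge 1}n\,b_n\bar w_0^{\,n-1}z^n$ has none, any nontrivial relation $\alpha\tilde K_{w_0}+\beta\bar\partial\tilde K_{w_0}=0$ forces $\alpha=0$ upon comparing constant terms; hence the dependence can occur only in the form $\bar\partial\tilde K_{w_0}\equiv 0$. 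By the remark following Lemma \ref{k and dk dependent} this says $f'(w_0)=\langle f,\bar\partial\tilde K_{w_0}\rangle=0$ for every $f\in\tilde{\mathcal H}$, i.e. $n\,b_n\,w_0^{\,n-1}=0$ for all $n\ge 1$.

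The proof then splits according to whether $w_0$ vanishes. If $w_0\ne 0$ then $w_0^{\,n-1}\ne 0$, so $b_n=0$ for every $n\ge 1$; thus $a_n\equiv a_0$, the kernel $K(z,w)=a_0(1-z\bar w)^{-1}$ is a positive multiple of the S\"{z}ego kernel of $\mathbb D$, and $T=M^*$ is unitarily equivalent to $U_+^*$, the positive scalar $a_0$ only rescaling the norm. If $w_0=0$ the condition degenerates to $b_1=0$, that is $a_1=a_0$, equivalently $\omega_0=1$. This is the step that genuinely uses co-hyponormality: since $(\omega_n)$ is nondecreasing and bounded above by $1$ by contractivity, $\omega_0=1$ forces $\omega_n=1$ for all $n$, whence again $a_n\equiv a_0$ and $T\cong U_+^*$.

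I expect the only delicate point to be the case $w_0=0$. At an interior point $w_0\ne 0$ the pointwise extremality condition is by itself strong enough to collapse the kernel, and co-hyponormality is not needed; but at the origin it pins down only the first weight, and one must propagate extremality to all weights using the monotonicity supplied by co-hyponormality together with the contractive upper bound. Everything else is routine coefficient comparison once the realization and the reduction to $\bar\partial\tilde K_{w_0}\equiv 0$ are in place.
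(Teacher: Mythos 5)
Your proof is correct, and for the crucial case $w_0 \neq 0$ it takes a genuinely different route from the paper's. Both arguments share the same skeleton: realize $T$ as $M^*$ on a space with diagonal kernel $K(z,w)=\sum_n a_n z^n\bar w^n$, pass to $\tilde K=(1-z\bar w)K$ with coefficients $b_0=a_0$, $b_n=a_n-a_{n-1}\ge 0$, and split according to whether $w_0$ vanishes; the case $w_0=0$ (the only place co-hyponormality is needed) is handled identically in both, via $b_1=0$ and the monotone, contractive weight sequence. For $w_0\neq 0$, however, the paper invokes Remark \ref{criterion for equality at one point}: it forms the auxiliary non-negative definite kernel $F_K(z,w)=\langle e(z),e(w)\rangle$ built from $e(w)=\tfrac{1}{\sqrt 2}\left(\tilde K_w\otimes\bar\partial\tilde K_w-\bar\partial\tilde K_w\otimes\tilde K_w\right)$, notes that for a diagonal $K$ it has an expansion $\sum_n c_n|w|^{2n}$ with $c_n\ge 0$, so vanishing at one non-zero point forces $F_K\equiv 0$, i.e.\ curvature equality at \emph{every} point of $\mathbb D$, and then appeals to the Cowen--Douglas theorem that curvature is a complete unitary invariant in $B_1(\mathbb D)$. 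You instead exploit Lemma \ref{k and dk dependent} directly: since $\tilde K_{w_0}$ has non-zero constant term $b_0$ while $\bar\partial\tilde K_{w_0}$ has none (this is where the diagonal form of the kernel is used), linear dependence can only mean $\bar\partial\tilde K_{w_0}\equiv 0$, whence $nb_n\bar w_0^{\,n-1}=0$ for all $n\ge 1$, so $b_n=0$ and $K=a_0(1-z\bar w)^{-1}$ is a positive multiple of the S\"{z}ego kernel; this identifies the operator without any rigidity theorem. Your route is more elementary -- no tensor-product kernel, no appeal to curvature as a complete invariant -- and it also explains why the second alternative in the Remark following Lemma \ref{k and dk dependent} (namely $\ker\bar\partial\tilde K_\zeta=\ker\tilde K_\zeta$ with $\bar\partial\tilde K_\zeta\neq 0$) can never occur for weighted shifts. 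What the paper's argument buys in exchange is reusable machinery: the kernel $F_K$ furnishes a one-point extremality criterion valid for arbitrary contractions in $B_1(\mathbb D)$, not just those with diagonal kernels, and yields the stronger intermediate conclusion that extremality at a single non-zero point propagates to extremality everywhere.
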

\begin{proof}
Let $T$ be a contraction in $B_1(\mathbb D)$ and $\mathcal H_K$ be the associated reproducing kernel Hilbert space so that  $T$ is unitarily equivalent to the operator $M^*$ on $\mathcal{H}_K.$ By our hypothesis on $T$ we have that operator $M$ on $\mathcal{H}_K$ is a unilateral forward weighted shift. Without loss of generality, we may assume that the reproducing kernel $K$ is of the  form 
\begin{align*}
K(z,w)= \sum\limits_{n=0}^{\infty}a_n z^n\bar{w}^n,\;  \;z,w \in \mathbb{D}; \;\mbox{where}\; a_n>0\;\; \mbox{for all}\;\; n\geq 0.
\end{align*}
By our hypothesis on the operator $T,$ we have that the operator $M$ on $\mathcal{H}_{K}$ is a contraction. So, the function $\tilde{K}$ defined by $\tilde{K}(z,w)= (1-z\bar{w})K(z,w)$ is a non negative definite kernel function. Consequently, following the Remark \ref{criterion for equality at one point}, the function $F_K (w,w)$ defined by $F_K(w,w)= \tilde{K}(w,w)^2 \tfrac{\partial ^2}{\partial{z}\bar{\partial}{z}}\log \tilde{K}(z,z)|_{z=w}$ is also non negative definite. The  kernel  $K(w,w)$ is a weighted sum of monomials $z^k\bar{w}^k,$ $k=0,1,2, \ldots.$  Hence both $\tilde{K}(w,w)$ and $F_K(w,w)$ are also weighted sums of the same form. So, we have 
\begin{align*}
F_K(w,w) = \sum\limits_{n=0}^{\infty}c_n |w|^{2n}, 
\end{align*} 
for some $c_n \geq 0.$ Now assume $\mathcal{K}_T(\bar{\zeta}) = -\frac{1}{(1- |\zeta|^2)^2}$ for some $\zeta$ in $\mathbb{D}.$  

{\bf{Case 1:}} If $\zeta\neq 0,$ 
then following Remark \ref{criterion for equality at one point}, we have
$$
F_K(\zeta,\zeta) =  \sum\limits_{n=0}^{\infty}c_n |\zeta|^{2n} = 0.
$$
Thus $c_n = 0$ for all $n\geq 0$ since $c_n\geq 0$ and $|\zeta|\neq 0.$
It follows that $F_K$ is identically zero on $\mathbb D\times \mathbb D,$  
that is, $\frac{\partial ^2}{\partial{z}\bar{\partial}{z}}\log \tilde{K}(z,z)|_{z={\bar{w}}} = 0$ for all  $w \in \mathbb{D}.$ Hence 
$$
\frac{\partial ^2}{\partial{z}\bar{\partial}{z}}\log K(z,z)|_{z={\bar{w}}} = \frac{\partial ^2}{\partial{z}\bar{\partial}{z}}\log S_{\mathbb{D}}(z,z)|_{z=\bar{w}}\; \mbox{for all}\; w \in \mathbb{D}. 
$$
Therefore, $\mathcal{K}_T(\bar{w}) = \mathcal{K}_{U_+^*}({\bar{w}})\; \mbox{for all}\; w \in \mathbb{D}$ making  $T \cong  U_+^*.$

Now let's discuss the remaining case, that is $\mathcal{K}_T(\bar{\zeta}) = -\frac{1}{(1- |\zeta|^2)^2},$ for $\zeta =0 \in \mathbb{D}$

{\bf{Case 2:}} If $\zeta = 0,$ then by Lemma \ref{k and dk dependent}, we have $\tilde{K}_0,\bar{\partial}\tilde{K}_0$ are linearly dependent. Now,  \begin{align*}
\tilde{K}(z,w):=(1-z\bar{w})K(z,w) = \sum\limits_{n=0}^{\infty}b_n z^n \bar{w}^n, 
\end{align*} where $b_0= a_0$ and $b_n = a_n - a_{n-1} \geq 0,$ for all $n \geq 1.$ Consequently, we have $\tilde{K}_0(z) \equiv b_0$ and $\bar{\partial}\tilde{K}_0 (z) = b_1z.$ Now $\tilde{K}_0,\bar{\partial}\tilde{K}_0$ are linearly dependent if and only if $b_1=0$ that is $a_0=a_1.$ 


Since $\{\sqrt{a_n} z^n\}_{n=0}^{\infty}$ is an orthonormal basis for the Hilbert space $\mathcal{H}_K,$ the operator $M$ on $\mathcal{H}_K$ is an unilateral forward weighted shift with weight sequence $w_n = \sqrt{\frac{a_n}{a_{n+1}}}$ for $n\geq 0$. So  the curvature of $M ^*$ at the point zero equal to $-1$ if and only if $w_0 = \sqrt{\frac{a_0}{a_{1}}} =1$. Now if we further assume $M$ is hyponormal, that is,  $M^* M \geq MM^*,$ then  the sequence $w_n$ must be increasing. Also contractivity  of $M$ implies that $w_n \leq 1$.  Therefore if $\mathcal{K}_{M ^*} (0) = -1$ for some contractive hyponormal backward weighted shift $M^*$ in $B_1(\mathbb D),$  then it follows that  $w_n =1$ for all $n\geq 1$. Thus any such operator is unitarily equivalent to the backward unilateral shift $U_+^*$  completing the proof of our claim. 
\end{proof}

The proof of {\bf Case 1} given above, actually proves  a little more  than what is stated in the proposition, which we record below as a separate  Lemma. 
 \begin{lem}\label{weighted shift non zero}
 Let $T$ be any contractive unilateral backward weighted shift  operator in  $B_1(\mathbb D).$  If $\mathcal{K}_T(w_0) = -(1- |w_0|^2)^{-2}$ for some  $w_0 \in \mathbb{D}$, $w_0 \not= 0,$ then the operator $T$ is unitarily equivalent to $U_+^*$, the backward shift operator.
 \end{lem}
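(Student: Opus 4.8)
The plan is simply to observe that the argument given for \textbf{Case 1} in the proof of Proposition \ref{hyponormal and equality} never invoked the co-hyponormality hypothesis, and to note that it already delivers the stronger conclusion whenever $w_0 \neq 0$. So the proof is essentially a re-reading of that case with the hyponormality assumption struck out. First I would realize $T$ as $M^*$ on a reproducing kernel Hilbert space $\mathcal{H}_K$ with $K(z,w) = \sum_{n=0}^{\infty} a_n z^n \bar{w}^n$, $a_n > 0$ (the normal form for a backward weighted shift in $B_1(\mathbb{D})$), and use that contractivity of $T$ is equivalent to $\tilde{K}(z,w) = (1-z\bar{w})K(z,w)$ being non-negative definite, exactly as in the discussion preceding Lemma \ref{k and dk dependent}.

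Next, following Remark \ref{criterion for equality at one point}, I would form the non-negative definite kernel $F_K(w,w) = \tilde{K}(w,w)^2 \frac{\partial^2}{\partial z \bar{\partial} z}\log \tilde{K}(z,z)|_{z=w}$. Since $K(w,w)$ is a power series in $|w|^2$ with non-negative coefficients, so are $\tilde{K}(w,w)$ and hence $F_K(w,w)$; thus $F_K(w,w) = \sum_{n=0}^{\infty} c_n |w|^{2n}$ with each $c_n \geq 0$. By the same Remark, the curvature equality $\mathcal{K}_T(w_0) = -(1-|w_0|^2)^{-2}$ is equivalent to $F_K(w_0,w_0) = 0$, that is, $\sum_{n=0}^{\infty} c_n |w_0|^{2n} = 0$.

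The decisive step---and the only place the hypothesis $w_0 \neq 0$ enters---is the conclusion that $c_n = 0$ for every $n$: because all $c_n \geq 0$ and $|w_0| \neq 0$, each term of the convergent series must vanish separately. This is precisely what fails when $w_0 = 0$, where only $c_0 = 0$ is forced; that gap is exactly what obliged \textbf{Case 2} of Proposition \ref{hyponormal and equality} to impose the extra hyponormality hypothesis. Once $c_n = 0$ for all $n$, we have $F_K \equiv 0$ on $\mathbb{D}\times\mathbb{D}$, hence $\frac{\partial^2}{\partial z \bar{\partial} z}\log \tilde{K}(z,z) \equiv 0$, so $\mathcal{K}_T(\bar{w}) = \mathcal{K}_{U_+^*}(\bar{w})$ for every $w \in \mathbb{D}$.

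Finally, I would appeal to the rigidity of the curvature as a complete unitary invariant for operators in $B_1(\mathbb{D})$ (the Cowen--Douglas theorem recalled in the introduction, via the local equivalence of the line bundles $E_T$ and $E_{U_+^*}$) to conclude $T \cong U_+^*$. There is no genuine obstacle in this argument; the entire content is the remark that hyponormality was a red herring for the off-origin case, the positivity of the Taylor coefficients of $F_K(w,w)$ supplying all the needed propagation from the single point $w_0$ to the whole disc.
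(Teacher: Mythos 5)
Your proposal is correct and is essentially identical to the paper's own proof: the paper explicitly derives Lemma \ref{weighted shift non zero} by observing that Case 1 of the proof of Proposition \ref{hyponormal and equality} never uses co-hyponormality, which is exactly your reading. Your chain---diagonal kernel, non-negativity of the Taylor coefficients of $F_K(w,w)$, vanishing at a nonzero point forcing $F_K\equiv 0$, and the curvature rigidity of the Cowen--Douglas class---matches the paper step for step.
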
 

Let $T$ be a contraction in $B_1(\mathbb D).$  Let $a$ be a fixed but arbitrary point in $\mathbb D$ and $\phi_{a}$ be an automorphism of the unit disc taking $a$ to $0.$ So, we have $\phi_{a}(z)= \beta(z-a)(1 -\bar{a} z)^{-1}$ for some unimodular constant $\beta.$ Note that
\begin{align*} 
\phi_{a}(T)-w = \beta(T-a)(1-\bar{a} T)^{-1}- w=\beta (T-\tfrac{\bar{\beta}w+a}{1+\bar{a} \bar{\beta}w}) (1+\bar{a}\bar{\beta} w)(1- \bar{a} T)^{-1},\,\,w\in \mathbb D.
\end{align*} 
Since $T$ is in $B_1(\mathbb D),$ we have $(T-wI)$ is fredholm operator for every $w\in\mathbb D.$ Consequently,    it follows that $\phi_{a}(T)-w$ is a fredholm operator and $\phi_{a}(T)$ lies in $B_1(\mathbb D).$
Let $\pmb{\gamma}(w)$ be a frame for the associated bundle $E_T$ of $T$ so that $T(\gamma(w))= w \gamma(w)$ for all $w\in \mathbb D.$ Now it is easy to see that $\phi_{a}(T)(\gamma(w))= \phi_{a}(w) \gamma(w)$ or equivalently $\phi_{a}(T)(\gamma\circ \phi_{a}^{-1}(w))= w (\gamma\circ \phi_{a}^{-1}(w)).$ So, $\gamma\circ \phi_{a}^{-1}(w)$ is a frame for the bundle $E_{\phi_{a}(T)}$ associated with $\phi_{a}(T).$ Hence the curvature $\mathcal K_{\phi_{a}(T)}(w)$ is equal to

\begin{align*}
\frac{\partial ^2}{\partial w \partial \bar{w}}\mbox{log} \|\gamma\circ \phi_{a}^{-1}(w)\|^2= |{\phi_{a}^{-1}}^{\prime}(w)|^2\frac{\partial ^2}{\partial z \partial \bar{z}}\mbox{log}\|\gamma(z)\|^2_{|{z=\phi_{a}^{-1}(w)}}= |{\phi_{a}^{-1}}^{\prime}(w)|^2 \mathcal K_T(\phi_{a}^{-1}(w)).
\end{align*} This leads us to the following transformation rule for the curvature 
\begin{equation} \label{transruleK}
\mathcal K_{\phi_{a}(T)}(\phi_{a}(z)) = \mathcal K_T(z) |\phi_{a}^\prime (z)|^{-2},\,\,\,z\in \mathbb D.
\end{equation} 
Since $|\phi_{a}^\prime(a)|=(1-|a|^2)^{-1},$ in particular we have that
\begin{align}\label{curvature trans rule}
\mathcal K_{\phi_{a}(T)}(0)&= \mathcal K_T(a) (1-|a|^2)^{2}.
\end{align}

{\sf Normalized kernel:}
Let $T$ be an operator in $B_1(\Omega^*)$ and $T$ has been realized as $M^*$ on a Reproducing kernel Hilbert Space $\mathcal H_K$ with non degenerate kernel function $K.$ For any fixed but arbitrary $\zeta\in \Omega,$ the function $K(z,\zeta)$ is non-zero in some neighborhood, say $U,$ of $\zeta.$ 
The function $\varphi_\zeta(z):=K(z,\zeta)^{-1}K(\zeta,\zeta)^{1/2}$ is then holomorphic.  The linear space $(\mathcal H, K_{(\zeta)}):=\{ \varphi_\zeta f: f\in \mathcal H_K\}$ then can be equipped with an inner product making the multiplication operator $M_{\varphi_\zeta}$ unitary. It then follows that 
$(\mathcal H, K_{(\zeta)})$ is a space of holomorphic functions defined on $U \subseteq \Omega,$ it has a reproducing kernel $K_{(\zeta)}$ defined by
\begin{align*}
K_{(\zeta)}(z,w)= K(\zeta,\zeta)K(z,\zeta)^{-1}K(z,w)\overbar{K(w,\zeta)}^{-1},\,\,z,w\in U,
 \end{align*}
 with the property 
 $K_{(\zeta)}(z,\zeta) = 1, z\in U$ and finally the multiplication operator $M$ on $\mathcal H_K$ is unitarily equivalent to the multiplication operator $M$ on $(\mathcal H, K_{(\zeta)}).$ The kernel $K_{(\zeta)}$ is said to be normalized at $\zeta.$

The realization of an operator $T$ in $B_1(\Omega^*)$ as the adjoint of the  multiplication operator on $\mathcal H_K$ is not canonical. However, the kernel function $K$ is determined  upto conjugation by a holomorphic function. Consequently, one sees that the curvature $\mathcal K_K$ is unambiguously defined.  On the other hand, 
Curto and Salinas (cf. \cite[Remarks 4.7 (b)]{Curtosalinas}) prove that the multiplication operators $M$ 
on  two Hilbert spaces $(\mathcal H, K_{(\zeta)})$ and $(\hat{\mathcal H}, \hat{K}_{(\zeta)})$ are unitarily equivalent if and only if $K_{(\zeta)} = \hat{K}_{(\zeta)}$ in some small neighbourhood of $\zeta.$ Thus the normalized kernel at $\zeta,$ that is, $K_{(\zeta)}$ is also unambiguously defined. It follows that  the curvature and the normalized kernel at $\zeta$ serve equally well as a complete unitary invariant for the operator $T$ in $B_1(\Omega^*).$

Let $T$ be a contraction in $B_1(\mathbb D).$ We assume, without loss of generality, that $T$ is unitarily equivalent to the operator $M^*$ on the reproducing kernel Hilbert space $\mathcal H_{T}(\zeta),$ with the reproducing kernel $K^{T}_{(\zeta)}$ having the property that $K^{T}_{(\zeta)}(z,\zeta)=1$ for all $z$ in a neighborhood of $\zeta.$ Let $\phi$ be an automorphism of unit disc $\mathbb D$ such that $\phi(\bar{\zeta})=0.$ We have already seen $\phi(T)\in B_1(\mathbb D).$ So, there exist a reproducing kernel Hilbert Space $\mathcal H_{\phi(T)}(0)$ associated with the kernel function $K^{\phi(T)}_{(0)}$ so that $\phi(T)$ is unitarily equivalent to the operator $M^*$ on the reproducing kernel Hilbert space $\mathcal H_{\phi(T)}(0),$ with the property that $K^{\phi(T)}_{(0)}(z,0)=1$ for all $z$ in a neighborhood of $0.$
Now in the following lemma we describe the relationship between $\mathcal H_{T}(\zeta)$ and $\mathcal H_{\phi(T)}(0).$

\begin{lem} \label{Normalized kernel of phi(T)}
$K^{\phi(T)}_{(0)}(z,w) = K^{T}_{(\zeta)}( {\phi^*}^{-1} (z),{\phi ^*}^{-1}(w))$ and  $\mathcal H_{\phi(T)}(0)=  \mathcal H_{T}(\zeta) \circ {\phi ^*}^{-1} ,$ where ${\phi^*}$ is the automorphism of $\mathbb D$ given by $\phi^*(z) = \overline{\phi(\bar{z})},\,z\in \mathbb D.$ 
\end{lem}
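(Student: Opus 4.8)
The plan is to realize the kernel on the right-hand side as the reproducing kernel of the space $\mathcal H_T(\zeta)\circ{\phi^*}^{-1}$ obtained by pulling $\mathcal H_T(\zeta)$ back along the automorphism ${\phi^*}^{-1}$, to check that this space realizes $\phi(T)$ as $M^*$ with a kernel that is already normalized at $0$, and then to invoke the uniqueness of the normalized kernel (the Curto--Salinas criterion quoted above) to conclude that it must coincide with $K^{\phi(T)}_{(0)}$.

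First I would record that $\phi^*(z)=\overline{\phi(\bar z)}$ is the automorphism of $\mathbb D$ with $\phi^*(\zeta)=0$ (since $\phi(\bar\zeta)=0$), so that ${\phi^*}^{-1}$ is again an automorphism of $\mathbb D$ carrying $0$ to $\zeta$. Writing $C\colon f\mapsto f\circ{\phi^*}^{-1}$ for the associated composition operator and declaring it to be unitary, i.e. $\|f\circ{\phi^*}^{-1}\|:=\|f\|_{\mathcal H_T(\zeta)}$, one obtains a Hilbert space $\hat{\mathcal H}:=\mathcal H_T(\zeta)\circ{\phi^*}^{-1}$. A one-line reproducing-property computation, using $k^T_u:=K^T_{(\zeta)}(\cdot,u)$ and $\hat K(\cdot,w)=k^T_{{\phi^*}^{-1}(w)}\circ{\phi^*}^{-1}$, shows that the reproducing kernel of $\hat{\mathcal H}$ is exactly
\[
\hat K(z,w)=K^T_{(\zeta)}\big({\phi^*}^{-1}(z),{\phi^*}^{-1}(w)\big).
\]
Setting $w=0$ and using ${\phi^*}^{-1}(0)=\zeta$ together with the normalization $K^T_{(\zeta)}(\cdot,\zeta)\equiv1$, I get $\hat K(z,0)\equiv1$, so $\hat K$ is already normalized at $0$.

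The central step is to identify $M^*$ on $\hat{\mathcal H}$ with $\phi(T)$, which I would do on eigenvectors. On $\mathcal H_T(\zeta)$ the vectors $k^T_w$ satisfy $Tk^T_w=\bar w\,k^T_w$, hence $\phi(T)k^T_w=\phi(\bar w)\,k^T_w$ by the holomorphic functional calculus. On the other side $M^*\hat k_\mu=\bar\mu\,\hat k_\mu$, and the identity $Ck^T_w=\hat k_{\phi^*(w)}$ (immediate from the formulas for $C$ and $\hat K$), combined with $\overline{\phi^*(w)}=\phi(\bar w)$, gives $M^*(Ck^T_w)=\phi(\bar w)(Ck^T_w)$. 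Thus $C^{-1}M^*C$ and $\phi(T)$ agree on the total set $\{k^T_w:w\in\mathbb D\}$, so $C^{-1}M^*C=\phi(T)$ and $M^*$ on $\hat{\mathcal H}$ is unitarily equivalent to $\phi(T)$. Since $\phi(T)$ is also realized as $M^*$ on $\mathcal H_{\phi(T)}(0)$ with kernel $K^{\phi(T)}_{(0)}$ normalized at $0$, and both $\hat K$ and $K^{\phi(T)}_{(0)}$ are normalized at $0$, the Curto--Salinas uniqueness of the normalized kernel forces $\hat K=K^{\phi(T)}_{(0)}$, which is the first assertion; the second, $\mathcal H_{\phi(T)}(0)=\mathcal H_T(\zeta)\circ{\phi^*}^{-1}$, then follows because equal reproducing kernels determine the same Hilbert function space.

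I expect the main obstacle to be the bookkeeping of the conjugations: the operator $\phi(T)$ is built by functional calculus on the spectral side, while the kernel lives on the function side, and keeping $\bar w$ versus $w$ straight is precisely what produces $\phi^*$ rather than $\phi$ in the answer and what makes ${\phi^*}^{-1}(0)=\zeta$, hence the automatic normalization at $0$ that makes the uniqueness argument applicable. Everything else is a routine verification, the only genuine external input being the uniqueness of the normalized kernel quoted from Curto--Salinas.
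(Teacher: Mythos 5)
Your proposal is correct and follows essentially the same route as the paper: pull back $\mathcal H_T(\zeta)$ along ${\phi^*}^{-1}$ via the unitary composition operator, observe that the resulting kernel is normalized at $0$ because ${\phi^*}^{-1}(0)=\zeta$, identify $M^*$ on the pulled-back space with $\phi(T)$, and conclude by uniqueness of the normalized kernel. The only cosmetic difference is that the paper establishes the unitary equivalence at the operator level (the composition operator intertwines $M$ with $M_{\phi^*}=\phi^*(M)$, and then $(\phi^*(M))^*=\phi(M^*)$), whereas you verify it on the total set of kernel vectors; both verifications are routine and equivalent.
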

\begin{proof}
For $\phi \in Aut(\mathbb D),$ we have that the function $L(z,w)= K^{T}_{(\zeta)}( {\phi^*}^{-1} (z),{\phi ^*}^{-1}(w))$ is a positive definite kernel and $f\mapsto f\circ {\phi^*}^{-1}$ is a unitary map between $\mathcal H_{T}(\zeta)$ and $\mathcal H_{L}.$ Consequently, the operator $M$ on $\mathcal H_{L}$ is unitarily equivalent to the operator $\phi^*(M) = M_{\phi^*}$ on $\mathcal H_{T}(\zeta).$ Hence, the operator $M^*$ on $\mathcal H_{L}$ is unitarily equivalent to the operator $ {(\phi^*(M))}^*= \phi(M^*)$ on $\mathcal H_{T}(\zeta).$ Also note that as ${\phi^*}^{-1}(0)= \zeta,$ we have $L(z,0)= 1$ for all $z$ in a small neighborhood of $0$, that is, $L$ is normalized at $0.$ Hence we get that $K^{\phi(T)}_{(0)}(z,w) =L(z,w)$ and the associated Hilbert space $\mathcal H_{\phi(T)}(0)=  \mathcal H_{T}(\zeta) \circ {\phi ^*}^{-1} .$
\end{proof}

Observe that as the kernel function $K^{T}_{(\zeta)}$ is normalized at $\zeta,$ we have $1\in  \mathcal H_{T}(\zeta).$ So we have that $ \vee \{1, \phi^*, {\phi^*}^2,\ldots\} \subset \mathcal H_{T}(\zeta).$ Now, it is straightforward to verify that polynomials are dense in $\mathcal H_{\phi(T)}(0)$ if and only if $ \mathcal H_{T}(\zeta) = \vee \{1, \phi^*, {\phi^*}^2,\ldots\} .$


In the following theorem it is shown that the question of R. G. Douglas has an affirmative answer in the class of operator whose adjoint is a $2$ hyper-contraction, with a mild assumption on the Hilbert space $\mathcal H_{T}(\zeta).$ 

First let's recall the definition of $2$ hyper-contraction (cf. \cite{Aglerhypercontraction}). An operator $A$ acting on a Hilbert space $\mathcal H$ is said to be $2$ hyper-contraction if $I-A^*A \geq 0$ and ${A^*}^2A^2-2A^*A +I\geq 0.$ For example every contractive subnormal operator is a $2$ hyper-contraction (cf. \cite[Theorem 3.1]{Aglerhypercontraction}). The following lemma will be very useful in establishing our next result.
\begin{lem}\label{hypercontraction}
Let $A$ be a $2$ hyper-contraction and $\varphi$ be a bi holomorphic automorphism of unit disc $\mathbb D$. Then $\varphi(A)$ is also a $2$ hyper-contraction.
\end{lem}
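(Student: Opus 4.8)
The plan is to reduce to the special automorphism $\varphi_a(z) = (z-a)/(1-\bar a z)$ and then transport the two defining positivity conditions through an explicit operator congruence. First I would dispose of the unimodular factor: any $\varphi\in\mathrm{Aut}(\mathbb D)$ can be written $\varphi=\beta\varphi_a$ with $|\beta|=1$, $a\in\mathbb D$, and $\varphi_a(z)=(z-a)/(1-\bar a z)$. Writing $D_1(A)=I-A^*A$ and $D_2(A)={A^*}^2A^2-2A^*A+I$, both expressions are unchanged under $A\mapsto\beta A$ because $|\beta|=1$; so it suffices to show $D_1(\varphi_a(A))\ge0$ and $D_2(\varphi_a(A))\ge0$. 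Note that $\varphi_a$ is holomorphic on a neighbourhood of $\overline{\mathbb D}$ (its only pole is at $1/\bar a\notin\overline{\mathbb D}$), and that $r(A):=(I-\bar a A)^{-1}$ exists as a bounded operator since $\|\bar a A\|\le|a|<1$. This is precisely where the contractivity of $A$ enters.

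The main tool is an elementary product rule for the holomorphic functional calculus. For scalar $g,h$ holomorphic near $\overline{\mathbb D}$, expanding in power series gives $g(A)^*h(A)=\big[\,\overline{g(\zeta)}\,h(\eta)\,\big]\big|_A$, where $[\,\cdot\,]|_A$ denotes the substitution $\bar\zeta\mapsto A^*$ (placed to the left) and $\eta\mapsto A$ (placed to the right), carried out monomial by monomial. Applying this to $B=\varphi_a(A)$ yields $B^*B=[\overline{\varphi_a(\zeta)}\varphi_a(\eta)]|_A$ and ${B^*}^2B^2=[\overline{\varphi_a(\zeta)}^2\varphi_a(\eta)^2]|_A$, so that
$$D_1(B)=\big[\,1-\overline{\varphi_a(\zeta)}\varphi_a(\eta)\,\big]\big|_A,\qquad D_2(B)=\big[\,\big(1-\overline{\varphi_a(\zeta)}\varphi_a(\eta)\big)^2\,\big]\big|_A.$$
The same monomial expansion gives the sandwiching identity $\big[\,\overline{h(\zeta)}\,F(\bar\zeta,\eta)\,h(\eta)\,\big]\big|_A=h(A)^*\big(F|_A\big)h(A)$ for any hereditary $F$, which is the step that will transfer positivity.

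Into these I would feed the Möbius factorization
$$1-\overline{\varphi_a(\zeta)}\varphi_a(\eta)=(1-|a|^2)\,\overline{r(\zeta)}\,(1-\bar\zeta\eta)\,r(\eta),\qquad r(\eta)=(1-\bar a\eta)^{-1},$$
verified by clearing denominators. Since the factors are scalar functions they commute, so squaring gives $(1-\overline{\varphi_a(\zeta)}\varphi_a(\eta))^2=(1-|a|^2)^2\,\overline{r(\zeta)}^2\,(1-\bar\zeta\eta)^2\,r(\eta)^2$. Recognising $(1-\bar\zeta\eta)|_A=D_1(A)$ and $(1-\bar\zeta\eta)^2|_A=D_2(A)$ and applying the sandwiching identity with $h=r$ and $h=r^2$ respectively, I obtain
$$D_1(\varphi_a(A))=(1-|a|^2)\,r(A)^*\,D_1(A)\,r(A),\qquad D_2(\varphi_a(A))=(1-|a|^2)^2\,\big(r(A)^2\big)^*\,D_2(A)\,\big(r(A)^2\big).$$
Because $A$ is a $2$ hyper-contraction, $D_1(A)\ge0$ and $D_2(A)\ge0$; each right-hand side is a congruence $X^*D_j(A)X$ by a positive scalar times $X=r(A)^{j}$, hence is positive. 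Thus $\varphi_a(A)$, and therefore $\varphi(A)$, is a $2$ hyper-contraction.

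The step I expect to be the only real obstacle is the rigorous justification of the substitution calculus: one must check that the scalar-function manipulations above (where everything commutes freely) transfer to the correctly \emph{ordered} operator expressions under $[\,\cdot\,]|_A$, and that the relevant power series converge in operator norm. Both are routine once one observes that $\sigma(A)\subseteq\overline{\mathbb D}$ and $|a|<1$, so that $\varphi_a$, $r$, and $r^2$ are holomorphic on a fixed neighbourhood of the spectrum and the holomorphic functional calculus behaves multiplicatively on antiholomorphic$\times$holomorphic products.
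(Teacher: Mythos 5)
Your proof is correct, and it lands on exactly the same operator identity that the paper establishes, namely the congruence
\begin{align*}
D_2(\varphi_a(A)) \;=\; (1-|a|^2)^2\,\bigl((1-\bar a A)^{-2}\bigr)^{*}\, D_2(A)\,(1-\bar a A)^{-2},
\end{align*}
but you reach it by a genuinely different route. The paper verifies this identity by a direct, fairly long chain of operator-algebra manipulations (factoring $(1-aA^*)^{-2}$ and $(1-\bar a A)^{-2}$ out of ${\varphi(A)^*}^2\varphi(A)^2-2\varphi(A)^*\varphi(A)+I$ and repeatedly regrouping), and it disposes of the first condition $I-\varphi(A)^*\varphi(A)\geq 0$ separately by invoking von Neumann's inequality. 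You instead reduce everything to the one-line scalar M\"obius identity $1-\overline{\varphi_a(\zeta)}\varphi_a(\eta)=(1-|a|^2)\overline{r(\zeta)}(1-\bar\zeta\eta)r(\eta)$ and its square, and transfer it to operators via the hereditary (Agler) substitution calculus, which handles $D_1$ and $D_2$ uniformly by the same sandwiching argument. What your approach buys is conceptual economy and generality: the same factorization argument immediately gives the analogous statement for $k$-hypercontractions for every $k$, since $(1-\bar\zeta\eta)^k$ factors identically, whereas the paper's computation would have to be redone from scratch. What the paper's approach buys is self-containedness: it needs no formal substitution calculus, only bounded-operator algebra, and so avoids the convergence and well-definedness checks you correctly flag as the one delicate point of your argument (these checks do go through, since $\|A\|\leq 1$ and all the functions involved are holomorphic on a disc of radius $1/|a|>1$, so the relevant power series converge absolutely in operator norm and the coefficient-matching argument is legitimate).
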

\begin{proof}
Let $A$ be a $2$ hyper-contraction. Let $\varphi$ be the automorphism of the unit disc $\mathbb D$ given by $\varphi(z)= \lambda \frac{z-a}{1-\bar{a}z}$ for some unimodular constant $\lambda$ and $a\in \mathbb D.$ So $\varphi(A) = \lambda (A-a)(1-\bar{a}A)^{-1}.$ Since $A$ is a contraction, using von-Neuman's inequality we have $\varphi(A)$ is also a contraction. Thus
\begin{align*}
{\varphi(A)^*}^2 \varphi(A)^2 - 2\varphi(A)^*\varphi( A) + I \hskip 34em\\
= (1- a A^*)^{-2}\bigg\{ (A^*-\bar{a})^2 (A-a)^2 - 2 (1- a A^*)(A^*-\bar{a})(A-a)(1-\bar{a}A) \hskip 16em&\\                          
+(1- a A^*)^{2}(1-\bar{a}A)^2\bigg \}(1-\bar{a}A)^{-2} \hskip 8em\\
=  (1- a A^*)^{-2}\bigg\{ (A^*-\bar{a})^2 (A-a)^2 -  (A^*-\bar{a})(1- a A^*)(1-\bar{a}A)(A-a) \hskip 16em \hskip 1ex\\
- (1- a A^*)(A^*-\bar{a})(A-a)(1-\bar{a}A) + (1- a A^*)^{2}(1-\bar{a}A)^2\bigg \}(1-\bar{a}A)^{-2} \hskip 8em \\
= (1- a A^*)^{-2}\bigg\{(A^*-\bar{a})\{(A^*-\bar{a})(A-a)-(1- a A^*)(1-\bar{a}A)\}(A-a) \hskip 16em\\
 - (1- a A^*)\{(A^*-\bar{a})(A-a)-(1- a A^*)(1-\bar{a}A)\} (1-\bar{a}A)\bigg\}(1-\bar{a}A)^{-2}\hskip 8em\\
=  (1- a A^*)^{-2}\bigg\{(A^*-\bar{a})(A^*A-1)(1-|a|^2)(A-a) \hskip 24em \hskip 2ex\\
 - (1- a A^*)(A^*A-1)(1-|a|^2)(1-\bar{a}A)\bigg \}(1-\bar{a}A)^{-2} \hskip 8em\\
 = (1- a A^*)^{-2}(1-|a|^2)\bigg\{(A^*-\bar{a})(A^*A-1)(A-a) \hskip 24em \hskip 2ex\\
   - (1- a A^*)(A^*A-1)(1-\bar{a}A)\bigg \}(1-\bar{a}A)^{-2}\hskip 8em\\
 =  (1- a A^*)^{-2}(1-|a|^2)\bigg\{(1-|a|^2)({A^*}^2A^2 - 2A^*A +I)\bigg\}(1-\bar{a}A)^{-2}\hskip 17em \hskip 1.75ex\\
 = (1- a A^*)^{-2}(1-|a|^2)({A^*}^2A^2 - 2A^*A +I) (1-|a|^2)(1-\bar{a}A)^{-2}.\hskip 18em \hskip 2ex
\end{align*}
Since $A$ is a $2$ hyper-contration, it follows that $\varphi(A)$ is also a $2$ hyper-contraction.
\end{proof}

Now we state our main theorem of this section. For a contraction $T$ in $B_1(\mathbb D),$ we have seen that $\phi(T)$ is also in $B_1(\mathbb D)$ for any automorphism $\phi$ of the unit disc $\mathbb D.$ Recall that $\mathcal H_{\phi(T)}(0)$ denotes the reproducing kernel, normalized at the point $0,$  associated to $\phi(T).$  In the following theorem we answer the question of R. G. Douglas in affirmative in the class of operator whose adjoint is a $2$ hyper-contraction, with a assumption of polynomial density in the Hilbert space $\mathcal H_{\phi(T)}(0)$ for every automorphism $\phi$ of the unit disc $\mathbb D.$

\begin{thm}\label{uniqueness in unit disc}
Let $T$ be an operator in $B_1(\mathbb D).$ Assume that $T^*$ is a $2$ hyper-contraction and polynomials are dense in $\mathcal H_{\phi(T)}(0)$ for every automorphism $\phi$ of unit disc $\mathbb D.$ Then for any such operator $T$ in  $B_1(\mathbb D),$ if  
$\mathcal K_T(\zeta)=-(1- |\zeta|^2)^{-2},$ for an arbitrary but fixed point $\zeta \in \mathbb D,$ then $T$ must be unitarily equivalent to $U_+^*,$ the  backward shift operator.
\end{thm}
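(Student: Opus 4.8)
The plan is to move the extremal point to the origin by a disc automorphism, and then to show that the extremal curvature identity, together with the two hyper-contractivity inequalities, pins the normalized kernel down to the Szeg\H{o} kernel. First I would fix an automorphism $\phi$ of $\mathbb D$ with $\phi(\zeta)=0$ and replace $T$ by $S:=\phi(T)\in B_1(\mathbb D)$. By the transformation rule \eqref{transruleK} and its specialization \eqref{curvature trans rule}, $\mathcal K_S(0)=\mathcal K_T(\zeta)(1-|\zeta|^2)^2=-1$. Writing $\phi(z)=\lambda(z-\zeta)(1-\bar\zeta z)^{-1}$ one has $S^*=\phi(T)^*=\phi^*(T^*)$ with $\phi^*(z)=\overline{\phi(\bar z)}$ again an automorphism, so Lemma \ref{hypercontraction} shows $S^*$ is a $2$ hyper-contraction; moreover the density hypothesis, being assumed for \emph{every} automorphism, persists for $S$. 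Hence it is enough to treat $\zeta=0$: realize $S$ as $M^*$ on $\mathcal H_K$ with $K=K^S_{(0)}$ normalized so that $K(z,0)=1$, with $M=S^*$ a $2$ hyper-contraction and, by polynomial density, with $\{z^n\}_{n\ge 0}$ a basis of $\mathcal H_K$.

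Next I would record the consequences of $\mathcal K_S(0)=-1$ in terms of the Gram matrix $g_{mn}:=\langle z^n,z^m\rangle_{\mathcal H_K}$. Normalization gives $g_{00}=K(0,0)=1$ and $g_{0n}=\langle z^n,K_0\rangle=\delta_{0n}$. Writing $K(z,w)=\sum a_{mn}z^m\bar w^n$, normalization yields $a_{00}=1$ and $a_{m0}=a_{0n}=0$, so a direct computation gives $\mathcal K_S(0)=-a_{11}$, whence $a_{11}=1$ (this is the content of Lemma \ref{k and dk dependent} and Remark \ref{criterion for equality at one point}). Since the Taylor coefficients of $K$ are the entries of the inverse Gram matrix, $a_{mn}=(G^{-1})_{mn}$ for $G=(g_{mn})$, the elementary bound $(G^{-1})_{11}\ge g_{11}^{-1}$ gives $g_{11}\ge a_{11}^{-1}=1$, while contractivity forces $g_{11}\le g_{00}=1$; thus $g_{11}=1$.

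The crux is to propagate this. On the basis $\{z^n\}$ the inequalities $I-M^*M\ge 0$ and $I-2M^*M+M^{*2}M^2\ge 0$ read as the matrix positivities
\[
\big(g_{lk}-g_{l+1,k+1}\big)_{k,l\ge 0}\ \ge\ 0 \qquad\text{and}\qquad \big(g_{lk}-2g_{l+1,k+1}+g_{l+2,k+2}\big)_{k,l\ge 0}\ \ge\ 0,
\]
because $\langle M^{*j}M^j z^k,z^l\rangle=g_{l+j,k+j}$. Reading off diagonal entries, an induction starting from $g_{00}=g_{11}=1$ shows $g_{nn}=1$ for all $n$: the second inequality gives $g_{n+1,n+1}\ge 2g_{nn}-g_{n-1,n-1}=1$ and the first gives $g_{n+1,n+1}\le g_{nn}=1$. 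Then the first matrix is positive semidefinite with vanishing diagonal, hence zero, so $g$ is Toeplitz, $g_{l+1,k+1}=g_{lk}$; together with $g_{0n}=\delta_{0n}$ this yields $g_{lk}=\delta_{lk}$. Thus the monomials are orthonormal, $K(z,w)=(1-z\bar w)^{-1}$ is the normalized Szeg\H{o} kernel, and $S\cong U_+^*$. Applying \eqref{transruleK} in reverse then gives $\mathcal K_T(z)=-(1-|z|^2)^{-2}$ for every $z\in\mathbb D$, whence $T\cong U_+^*$, since the curvature is a complete unitary invariant in $B_1(\mathbb D)$.

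The main obstacle is exactly this propagation step, and what makes it delicate is that passing to $\phi(T)$ destroys any weighted-shift structure, so --- unlike the co-hyponormal case of Proposition \ref{hyponormal and equality} --- the kernel need not be diagonal and one cannot argue weight-by-weight. The point I expect to have to get right is that the two hyper-contractivity inequalities, read on the \emph{non-orthogonal} monomial basis, still force the Gram matrix to be the identity, with the single pointwise curvature identity supplying only the base case $g_{11}=1$ of the induction and normalization supplying the passage from Toeplitz to the identity. Polynomial density is precisely what guarantees that $\{z^n\}$ is a basis, so that these operator inequalities are equivalent to the displayed matrix inequalities.
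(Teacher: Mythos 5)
Your proof is correct and follows the same strategy as the paper's: reduce to $\zeta=0$ using an automorphism together with Lemma \ref{hypercontraction} and \eqref{curvature trans rule}, use the curvature equality at $0$ to pin down the norm of $z$, propagate $\|z^n\|=1$ for all $n$ via the second hyper-contraction inequality, deduce the Toeplitz property of the monomial Gram matrix from contractivity (a positive semidefinite matrix with vanishing diagonal must vanish), and finish with normalization and polynomial density. Two local deviations are worth recording. First, your base case: the paper gets $\|z\|=1$ by proving the stronger statement $\bar{\partial}K(\cdot,0)=z$, applying Lemma \ref{gram decrease} to the vectors $V_j=\bar{\partial}^jK(\cdot,0)/j!$ and exploiting $M^*V_j=V_{j-1}$; you bypass this, but your justification --- that the Taylor coefficients $a_{mn}$ of $K$ are the entries of the inverse of the infinite Gram matrix $G$ --- is not rigorous as stated, since for a non-orthogonal system $G$ need not define a bounded invertible operator on $\ell^2$, and Taylor series need not converge in the norm of $\mathcal H_K$ even when polynomials are dense. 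The inequality you actually need, $a_{11}g_{11}\geq 1$, has a one-line substitute: by the reproducing property, $1=\langle z,\bar{\partial}K(\cdot,0)\rangle\leq\|z\|\,\|\bar{\partial}K(\cdot,0)\|=\sqrt{g_{11}}\sqrt{a_{11}}$, because $\|\bar{\partial}K(\cdot,0)\|^2=\partial\bar{\partial}K(0,0)=a_{11}$; combined with $a_{11}=1$ from the curvature equality and $g_{11}\leq g_{00}=1$ from contractivity, this gives $g_{11}=1$. Second, the endgame: you return from $S=\phi(T)$ to $T$ via \eqref{transruleK} and the completeness of the curvature as a unitary invariant on $B_1(\mathbb D)$, while the paper instead quotes the homogeneity of $U_+^*$, so that $T=\phi^{-1}(S)\cong\phi^{-1}(U_+^*)\cong U_+^*$; both routes are sound, and both facts are already used elsewhere in the paper.
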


\begin{proof}
Let $T$ be an operator in $B_1(\mathbb D)$ such that the adjoint $T^*$ is a $2$ hyper-contraction. Let $\phi_{\zeta}$ be an automorphism of unit disc $\mathbb D$ and $P$ be the operator $\phi_{\zeta}(T).$ We have seen that $P$ is in $B_1(\mathbb D)$ and from Lemma \ref{hypercontraction} it follows that the adjoint $P^*$ is a $2$ hyper-contraction. Now assume $\mathcal K_T(\zeta)= -(1-|\zeta|^2)^{-2}.$ Following \eqref{curvature trans rule}, we see that $\mathcal K_{P}(0)=-1.$ 

Let $K$ denotes the kernel function for the reproducing kernel Hilbert space $\mathcal{H}_{\phi_{\zeta}(T)}(0)$ so that $P$ is unitarily equivalent to the operator $M^*$ on the reproducing kernel Hilbert space $\mathcal{H}_{K}( = \mathcal{H}_{\phi_{\zeta}(T)}(0)).$  Note that $K$ is normalized at $0,$ that is,  $K_{0}(z) =K(z,0) =1$ for all $z$ in some neighborhood of $0.$ By our assumption, polynomials are dense in $\mathcal{H}_K.$ Now we claim that $\bar{\partial}K(\cdot,0)= z.$ As $\mathcal H_K$ consists of holomorphic function, for any $f\in \mathcal H_K,$ we have $f(z) = \sum\limits_{j=1}^{\infty} a_j z^j,$ where $$a_j= \frac{f^{(j)}(0)}{j!} = \langle f, \frac{\bar{\partial}^j K(\cdot,0)}{j!}\rangle.$$

Let $V_j = \frac{\bar{\partial}^j K(\cdot,0)}{j!}.$ To prove $V_1=\bar{\partial}K(\cdot,0)= z,$ it is sufficient to show that $\langle V_1,V_j\rangle =0$ for all $j\geq 0,$ except $j=1.$ First note that since $K(z,0)=1= K(0,z),$ we have $\bar{\partial} K(0,0) = 0.$ It follows that $\langle V_1,V_0\rangle =0.$ Since $K$ is normalized at $0,$ we also have $\mathcal K_P(0)= - \partial \bar{\partial} K(0,0) = - \|V_1\|^2.$ Hence we find that $\|V_1\|^2=1.$ Now to show $\langle V_1,V_j\rangle =0$ for $j\geq 1,$ we need the following  lemma.
\begin{lem}\label{gram decrease}
Let $V$ and $W$ be two finite dimensional inner product space and $A:V \to W$ be a linear map. Let $\{v_1,v_2,\ldots,v_k\}$ be a basis for $V$ and $G_{v},$ (resp. $G_{Av}$) be the grammian  $(\!\!(\langle v_j, v_i \rangle_{V} )\!\!)$ (resp.  $(\!\!(\langle Av_j, Av_i \rangle_{W} )\!\!)$).  The linear map $A$ is a contraction if and only if $ G_{Av} \leq G_v.$ 
\end{lem}
\begin{proof}
Let $x= c_1v_1+c_2v_2+\cdots+c_nv_n$ be an arbitrary element in $V.$ Then the easy verification  that $\|Ax\|^2_{W} \leq \|x\|^2_{V}$ is equivalent $\langle G_{Av} c, c\rangle \leq \langle G_{v}c,c \rangle$ completes the proof.  
\end{proof}
As $(M^*-\bar{w})K(\cdot,w)=0,$ it is easily verified  that $(M^*-\bar{w})\frac{\bar{\partial}K^j(\cdot,w))}{j!}= \frac{\bar{\partial}K^{j-1}(\cdot,w))}{(j-1)!}$ for all $j\geq 1.$ So, we have $M^*(V_j) = V_{j-1}$ for $j\geq 1$ and $M^*(V_0) = 0.$ We also have $\|M^*\|\leq 1.$ Now applying the  lemma $\ref{gram decrease}$ to the set of vector $\{V_0,V_1,\ldots,V_n\}$ we get that 
\begin{align*}
\begin{pmatrix}
\langle V_0, V_0\rangle & \langle V_1, V_0\rangle & \cdots  & \langle V_n, V_0\rangle\\
\langle V_0, V_1\rangle & \langle V_1, V_1\rangle & \cdots  & \langle V_n, V_1\rangle\\
\vdots & \vdots & \ddots & \vdots\\
\langle V_0, V_n\rangle & \langle V_1, V_n\rangle & \cdots  & \langle V_n, V_n\rangle
\end{pmatrix} - 
\begin{pmatrix}
0 & 0 & \cdots  & 0\\
0 & \langle V_0, V_0\rangle & \cdots  & \langle V_{n-1}, V_0\rangle\\
\vdots & \vdots & \ddots & \vdots\\
0 & \langle V_0, V_{n-1}\rangle & \cdots  & \langle V_{n-1}, V_{n-1}\rangle
\end{pmatrix} \geq 0.
\end{align*}
Since $\|V_0\|^2 = K(0,0) =1$ and $\|V_1\|^2=1,$  $(2,2)$ entry of left hand side is $0.$ As left hand is a positive semidefinite matrix, this gives that 2nd row and 2nd column must be identically zero ( for positive semidefinite matrix $B,$ $\langle Be_2,e_2\rangle = 0$ gives $\sqrt{B}e_2= 0,$ and hence $Be_2=0.$) Consequently we get that $\langle V_j,V_1\rangle = \langle V_{j-1}, V_0\rangle$ for all $j= 2,\ldots,n.$ But as $K(z,0)=1=K(0,z),$ it follows that $\bar{\partial}^{k}K(0,0)= \langle V_k,V_0\rangle= 0$ for all $k\geq 1.$ Hence we get that $\langle V_j,V_1\rangle = 0$ for all $j \geq 2.$ Hence we get that $V_1= \bar{\partial}K(\cdot,0)=z$ and $\|z\|^2=\|V_1\|^2= 1.$ We also have $V_0= K(\cdot,0)=1$ with $\|1\|^2=\|V_0\|^2 = K(0,0)=1.$
 

%


By our assumption, the operator $M$ on $\mathcal H_K$ is a $2$ hyper-contraction. In particular $M$ is also a contraction and $\|1\|_{\mathcal H_K} =1.$ Hence we have $\|z^n\|_{\mathcal H_K} \leq 1,$ for all $n\geq 1.$ Since $M$ on $\mathcal H_K$ is a $2$ hyper-contraction, that is, $I - 2 M^*M +{M^*}^2M^2 \geq 0,$ equivalently, $\|f\|_{\mathcal H_K} ^2 - 2 \|zf\|_{\mathcal H_K} ^2 + \|z^2 f\|_{\mathcal H_K} ^2 \geq 0,$ for all $f\in \mathcal H_K.$ Since $\|1\| =\|z\| =1,$ taking  $f =1,$ we have $\|z^2\| \geq 1.$ But we also have $\|z^2\| \leq 1,$ which gives us $\|z^2\| = 1.$ Inductively, by choosing $f= z^k,$ we obtain $\|z^{k +2}\| =1$ for every $k \in \mathbb N.$ Hence we see that $\|z^n\| = 1$ for all $n \geq 0.$

We  use  Lemma $\ref{gram decrease}$  to show that $\{z^n\mid n \geq 0\}$ is an orthonormal set in the Hilbert space $\mathcal H_K,$ 
Consider the two subspace $V$ and $W$ of $\mathcal H_K,$ defined by $V = \vee \{1,z,\cdots,z^k\}$  and $W = \vee\{z,z^2,\cdots,z^{k+1}\}.$ Since $M$ is a contraction, applying the lemma we have just proved, it follows that the matrix $B$ defined by  
\begin{align*}
B = \begin{pmatrix}
\langle z^j,z^i\rangle
\end{pmatrix}_{i,j=0}^k - \begin{pmatrix}
\langle z^{j+1},z^{i+1}\rangle
\end{pmatrix}_{i,j=0}^k
\end{align*}is  positive semi-definite. But we have $\|z^i\| =1,$ for all $i\geq 0.$ Consequently, each diagonal entry of  $B$ is zero. Hence $tr(B) =0.$ Since $B$ is positive semi-definite, it follows that $B=0.$ Therefore,   $\langle z^j, z^i \rangle = \langle z^{j+1},z^{i+1}\rangle$ for all $0\leq i,j\leq k.$ We have $K_0(z) \equiv 1.$ So, $M^*1 = M^*(K_0) =0.$ From this it follows that for any $k \geq 1,$ we have $\langle z^k, 1 \rangle = \langle z^{k-1},M^*1 \rangle =0.$ This together with  $\langle z^j, z^i \rangle = \langle z^{j+1},z^{i+1}\rangle$ for all $0\leq i,j\leq k,$ inductively shows that  $\langle z^j, z^i \rangle =0$ for every $i\neq j.$ Hence $\{z^n\mid n \geq 0\}$ forms an orthonormal set.

Since by our hypothesis polynomials are dense in $\mathcal H_K,$ the set of vectors $\{z^n \mid n \geq 0\}$ forms an orthonormal basis for $\mathcal H_K.$ Hence the multiplication operator $M$ on $\mathcal H_K$ is unitarily equivalent to $U_+,$ the unilateral forward shift operator. Consequently  $P$ is unitarily equivalent to $U_+^*.$ But $U_{+}^*$ being a homogeneous operator, we have $U_{+}^*$ is unitarily equivalent to $\phi_{\zeta}^{-1}(U_{+}^*)$ (cf. \cite{GMback}). Hence, we infer that $T= \phi_{\zeta}^{-1}(P)$ is unitarily equivalent to $U_+^*.$ 
\end{proof}
\section{Curvature Inequality  in the case of finitely connected domain}

Let $dv$ be the Lebesgue  area measure in the complex plane $\mathbb C$ and $h$ be a positive continuous function on $\Omega$ which is integrable w.r.t the area measure $dv.$ Now  consider the weighted Bergman space $(\mathbb A^2(\Omega), h dv)$ consists of  all holomorphic function $f$ on $\Omega$ satisfying $\|f\|_h^2 = \int_{\Omega}|f(z)|^2 h(z)dv(z) < \infty.$ Since for any compact set $C \subset \Omega,$ the function $h$ being bounded below on $C,$ it follows that evaluation at a point in $\Omega$ is a locally uniformly bounded linear map on $(\mathbb A^2(\Omega), h dv)$ and consequently  $(\mathbb A^2(\Omega), h dv)$ is a reproducing kernel Hilbert space. Let $K(z,w)$ be the kernel function for $(\mathbb A^2(\Omega),  hdv).$ It is well known that the multiplication operator $M$ by co-ordinate function on $(\mathbb A^2(\Omega), h dv)$  is a subnormal operator having $\overbar{\Omega}$ as a spectral set.

Let $w$ be an arbitrary but fixed point in $\Omega.$  Let $\mathcal M_w$ be the closed convex set in $\mathcal H = (\mathbb A^2(\Omega),  hdv)$ defined by $\mathcal M_w = \{ f\in \mathcal H : f(w) =0, f^{\prime}(w)= 1\}.$ Consider the following extremal problem 
\begin{align*}
 \inf \{\|f\|^2: f \in \mathcal M_w \}.
\end{align*}
Let $\mathcal E_w$ be the subspace of $\mathcal H$ defined by 
\begin{align*}
\mathcal E_w = \{ f\in \mathcal H : f(w) =0, f^{\prime}(w)= 0\}. 
\end{align*}
Since $f+g \in \mathcal M_w,$ whenever $f\in \mathcal M_w$ and $g \in \mathcal{E}_{w},$ It is evident that the unique function $F$ which solves the extremal problem must belong to $\mathcal{E}_{w}^{\perp}.$ From the reproducing property of $K,$ it follows that 
\begin{align*}
f(w) = \langle f, K(\cdot, w) \rangle, \,\,f^{\prime}(w)= \langle f, \bar{\partial}K(\cdot,w) \rangle .
\end{align*} Consequently, we have $\mathcal{E}_{w}^{\perp} = \vee \{ K(\cdot, w), \bar{\partial}K(\cdot,w) \}. $ 
A solution to the extremal problem  mentioned  above can be found in terms of the kernel function as in \cite{GMCI}:
\begin{align*}
\inf\; \{\|f\|^2:f\in M_w\} &= \bigg\{K(w,w) \bigg(\frac{\partial ^2}{\partial z \partial \bar{z}}\mbox{log} K(z,z)|_{z=w}\bigg)  \bigg\}^{-1}.
\end{align*}
Now consider the function $g$ in $\mathcal H$ defined by
\begin{align*}
g(z):= \frac{ K_{w}(z) F_{w}(z)}{2 \pi S(w,w) K(w,w)}, \;\;\;z\in \Omega,
\end{align*} 
where $F_{w}(z) = \frac{S_{w}(z)}{L_{w}(z)}$ denotes the Ahlfors map for the domain $\Omega$ at the point $w$ (cf. \cite[Theorem 13.1]{BELLcauchy}). Note that $|F_w(z)|<1$ on $\Omega$ and $|F_{w} (z)| \equiv 1$ on $\partial \Omega.$
As $g\in \mathcal H,$ we have the inequality  
\begin{align*}
\bigg\{K(w,w) \bigg(\frac{\partial ^2}{\partial z \partial \bar{z}}\mbox{log} K(z,z)|_{z=w}\bigg)  \bigg\}^{-1} & \leq \|g\|^2\\
 &= \frac{1}{4 \pi ^2 S(w,w)^2 K(w,w)^2}\int_{\Omega} |F_{w}(z)|^2 |K(z,w)|^2  h(z) dv(z) \\
 &< \frac{1}{4 \pi ^2 S(w,w)^2 K(w,w)^2}\int_{\Omega} |K(z,w)|^2 h(z) dv(z), \\
 &= \frac{1}{4 \pi ^2 S(w,w)^2 K(w,w)},
\end{align*}
where the last but one  strict inequality follows from the inequality $|F_w(z)|<1$ on $\Omega.$ 
Hence we have ${\partial_z \bar{ \partial}}_z \mbox{log} K(z,z)|_{z=w} > 4 \pi ^2 S(w,w)^2,$ which is the strict curvature inequality. 
This together with Theorem 2.6 of \cite{MRuniqueness} proves the following uniqueness theorem.  

Let 
$\mathfrak h=\{h: h \text{ is a positive continuous integrable (w.r.t area measure) function on}\,\, \Omega \}$  
and similarly let 
$
\hat{\mathfrak h}= \{\hat{h}: \hat{h} \text{ is a  positive continuous function on}\,\, \partial\Omega\}.
$
Finally, let $\mathcal F_1=\{ M\, \text{on}\, (\mathbb A^2(\Omega),  hdv): h \in \mathfrak h\}$ and $\mathcal F_2= \{ M\, \text{on}\, (H^2(\Omega),  \hat{h} ds):\hat{h} \in \hat{\mathfrak h}\}.$ Set $\mathcal F = \mathcal F_1 \cup \mathcal F_2.$  
\begin{thm}\label{uniqueness in finitely connected}
 Let $\zeta$ be an  arbitrary but fixed point in $\Omega$ and  $T$ be an  operator in $B_1(\Omega^*).$  Assume that the adjoint $T^*$ is in $\mathcal F.$ Then 
$\mathcal K_T(\bar{\zeta}) \leq - 4\pi^2 S_{\Omega}(\zeta,\zeta)^2,$ equality occurs for a unique operator, upto unitary equivalence.
\end{thm}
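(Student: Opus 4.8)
The plan is to assemble two facts: the strict curvature inequality just established for the weighted Bergman spaces, and the uniqueness theorem of \cite{MRuniqueness} governing the weighted Hardy spaces.

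First I would record the inequality for every $T$ with $T^*\in\mathcal F$. In either case $T^*=M$ is the multiplication operator on a reproducing kernel Hilbert space of holomorphic functions on $\Omega$, it is subnormal, and it admits $\overbar\Omega$ as a spectral set; dually $T\in B_1(\Omega^*)$ admits $\overbar\Omega^*$ as a spectral set. The curvature inequality \eqref{eq: alternative CI} of \cite{GMCI} then yields $\mathcal K_T(\bar\zeta)\leq -4\pi^2 S_\Omega(\zeta,\zeta)^2$, which is the asserted bound.

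Next I would separate the two families according to where equality can occur. For $T^*\in\mathcal F_1$ the computation preceding the theorem gives the \emph{strict} inequality $\tfrac{\partial^2}{\partial z\,\partial\bar z}\log K(z,z)|_{z=w}>4\pi^2 S(w,w)^2$; the decisive input is that the Ahlfors map $F_w$ obeys $|F_w|<1$ throughout $\Omega$, a set of positive area measure, so the test function $g$ produces a strict estimate against $h\,dv$. Hence no operator whose adjoint lies in $\mathcal F_1$ can be extremal, and any operator in $\mathcal F$ realizing equality at $\zeta$ must satisfy $T^*\in\mathcal F_2$. For the Hardy family $\mathcal F_2$ the measure lives on $\partial\Omega$, where $|F_w|\equiv 1$, so the corresponding estimate becomes an equality and extremal operators do occur. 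By the realizations recalled in the introduction (cf. \cite{ABsubnormal,MRuniqueness}) the members of $\mathcal F_2$ are exactly the adjoints of bundle shifts, and Theorem 2.6 of \cite{MRuniqueness} asserts that the point-wise extremal operator at a fixed point is uniquely determined inside this class. Combining this with the existence of a point-wise extremal operator at $\zeta$ from \cite{GMCI}, realized in $\mathcal F_2$, gives exactly one operator in $\mathcal F$, up to unitary equivalence, for which equality holds.

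The genuinely substantial ingredients — the strict inequality for $\mathcal F_1$, derived above, and the uniqueness statement for $\mathcal F_2$ — are already in place, so the remaining work is organizational. The one point I would watch is that the curvature value at $\zeta$ be a \emph{complete} unitary invariant within $\mathcal F_2$, so that matching the curvature at $\zeta$ singles out a single unitary equivalence class rather than merely a normalized kernel; this, together with the existence of the extremal member of $\mathcal F_2$, is exactly what \cite{MRuniqueness} supplies. Absent that cited uniqueness, reproving it — analyzing the weighted Hardy and bundle-shift models and showing the extremal condition pins down the weights — would be the main obstacle.
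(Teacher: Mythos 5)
Your proposal is correct and follows essentially the same route as the paper: the non-strict inequality comes from the spectral-set curvature inequality of \cite{GMCI}, the strictness for the weighted Bergman class $\mathcal F_1$ comes from the extremal-problem computation with the Ahlfors map ($|F_w|<1$ on $\Omega$), and uniqueness of the extremal operator within the weighted Hardy class $\mathcal F_2$ is delegated to Theorem 2.6 of \cite{MRuniqueness}. This is exactly how the paper argues, so no gap remains.
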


It was shown in \cite{MRuniqueness} that the class of operators in $\mathcal  F_2$  include the bundle shifts introduced in \cite{ABsubnormal}. We conclude this section by showing that the class $\mathcal F_1$ includes the class of Bergman bundle shift of rank $1$ introduced in \cite{Bergmanbundle}. Let $\mathcal G$ be the class of operators  contained in $\mathcal F$ defined by 
$ \mathcal G = \{M\, \text{on}\, (\mathbb A^2(\Omega),  hdv)$:  $\log h$ is harmonic on $\overbar{\Omega} \}.$ After recalling the definition of of Bergman bundle shift (cf. \cite{Bergmanbundle}), we proceed to establish the existence of a surjective map from $\mathcal G$ onto the class of Bergman bundle shift of rank $1$.

Let $\pi:\mathbb{D} \to \Omega$ be a holomorphic covering map. Bergman bundle shifts is realized as a multiplication operator on a certain subspace of the weighted Bergman space $(\mathbb A^2(\mathbb{D}), |\pi^{\prime}(z)|^2dv(z)).$  Let $G$ denote the group of deck transformation associated to the map $\pi$ that is $G = \{A \in \rm Aut(\mathbb{D}) \mid \pi \circ A = \pi \}.$ 
Let $\alpha$ be a character, that is, $\alpha \in \text{Hom}(G, \mathbb S^1).$ A  holomorphic function $f$ on unit disc $\mathbb{D}$ satisfying $f \circ A = \alpha (A) f , \;\;\text{for all}\; A \in G$, is called a modulus automorphic function of index $\alpha.$ Now consider the following subspace of the weighted Bergman space $(\mathbb A^2(\mathbb{D}), |\pi^{\prime}(z)|^2dv(z))$ which consists of modulus automorphic function of index $\alpha$, namely  
\begin{align*}
\mathbb A^2(\mathbb{D},\alpha) = \{ f \in (\mathbb A^2(\mathbb{D}), |\pi ^{\prime}(z)|^2 dv(z) ) \mid f \circ A = \alpha (A) f , \;\;\text{for all}\; A \in G\}
\end{align*}
 Let $T_{\alpha}$ be the multiplication operator by the covering map $\pi$ on the subspace $\mathbb A^2(\mathbb{D},\alpha).$ The operator $T_{\alpha}$ is called a Bergman bundle shift of rank 1 associated to the character $\alpha.$
 
Like the Hardy bundle shift (cf. \cite{ABsubnormal}) there is another way to realize the Bergman bundle shift as a multiplication operator $M$ on a Hilbert space of multivalued holomorphic function defined on  $\Omega$ with the property that its absolute value is single valued. A multivalued holomorphic  function defined on  $\Omega$ with the property that its absolute value is single valued is called a multiplicative function. Every modulus automorphic function $f$ on $\mathbb{D}$ induce a multiplicative function on $\Omega,$ namely, $f \circ \pi ^{-1}.$ Converse is also true (cf. \cite [Lemma 3.6]{VOICHICKideal}). We define the class $\mathbb A^2_{\alpha}(\Omega)$  consisting of multiplicative function in the following way: 
\begin{align*}
\mathbb A^2_{\alpha}(\Omega):=\{f\circ \pi^{-1}\mid f\in \mathbb A^2(\mathbb{D},\alpha) \}
\end{align*}

So the linear space $\mathbb A^2_{\alpha}(\Omega)$  consists of those multiple valued function $h$ on $\Omega$ for which $|h|$ is single valued, $|h|^2$ is integrable w.r.t area measure $dv$ on $\omega$ and $h$ is locally holomorphic in the sense that each point $w \in \Omega$ has a neighborhood $U_w$ and a single valued holomorphic function $g_{w}$ on $U_w$ with the property $|g_{w}| = |h|$ on $U_w$ (cf. \cite[p.101]{FISHER}). It follows that  the linear space $\mathbb A^2_{\alpha}(\Omega)$ endowed with the norm
\begin{align*}
\|f\|^2 &= \int _{\Omega} |f(z)|^2 dv(z),                        
\end{align*} is a Hilbert space. We denote it by $\big(\mathbb A^2_{\alpha}(\Omega),dv \big).$ In fact the map $f \mapsto f \circ \pi ^{-1}$ is a unitary map from $\mathbb A^2(\mathbb{D},\alpha)$ onto $\big(\mathbb A^2_{\alpha}(\Omega),dv\big)$ which intertwine the multiplication by $\pi$ on $\mathbb A^2(\mathbb{D},\alpha)$ and the multiplication by coordinate function $M$ on $\big(\mathbb A^2_{\alpha}(\Omega),dv\big).$ Thus the multiplication operator $M$ on $\big(\mathbb A^2_{\alpha}(\Omega),dv\big)$ is also called Bergman bundle shift of rank 1.

Let $h$ be a positive function on $\overbar{\Omega}$ with $\log h$ harmonic on $\overbar{\Omega}.$ 
Now we show that the the multiplication operator $M$ on the weighted Bergman space $(\mathbb A^2(\Omega),  hdv)$ is unitarily equivalent to a Bergman bundle shift $T_{\alpha}$ for some character $\alpha.$  
In this realization, it is not hard to see that all the Bergman bundle shift of rank 1 are in the same similarity class. First note that as $h$ is both bounded above and below. So, there exist positive constants $p,q$ such that $0 < p\leq h(z)\leq q$ for all $z\in \overbar{\Omega}.$ Consequently, we have 
\begin{align*}
p\|\cdot\|_{1} \leq \|\cdot\|_{h} \leq q \|\cdot\|_{1}.
\end{align*}
Thus the norms on weighted Bergman space $(\mathbb A^2(\Omega),  h\,dv)$ is equivalent to the norm on the Bergman space $(\mathbb A^2(\Omega),  dv).$ It follows that the identity map is an invertible operator between these two Hilbert spaces and intertwines the associated multiplication operator. This shows that every operator in the class $\mathcal G$ is similar to the multiplication operator $M$ on the Bergman space $(\mathbb A^2(\Omega),  dv).$

The following lemma is the essential step in proving the existence of a bijective map from $\mathcal G$ to the class of Bergman bundle shift of rank 1.

\begin{lem}\label{existence of modulus auto}
Let $h$ be a positive function on $\overbar{\Omega}$ such that $\log h$ is harmonic on $\overbar{\Omega},$ then there exist a  function $F$ in $H^{\infty}_{\gamma}(\Omega)$ for some character $\gamma$ such that $|F|^2 = h$ on ${\Omega}.$ In fact $F$ is invertible in the sense that there exist $G$ in $H^{\infty}_{\gamma ^{-1}}(\Omega)$ so that $FG = 1$  on $\Omega.$ Furthermore, given any character $\gamma$ there exists a positive function $h$ on $\overbar{\Omega}$ such that $\log h$ is harmonic on $\overbar{\Omega}$ and $h= |F|^2$ on $\Omega$ for some  $F$ in $H^{\infty}_{\gamma}(\Omega).$ 
\end{lem}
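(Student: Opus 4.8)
The plan is to transfer the problem to the simply connected universal cover $\mathbb D$, where every harmonic function admits a single-valued harmonic conjugate, and then to read off the character from the ambiguity introduced when the construction is pushed back down to $\Omega$ through the deck group $G$. For the forward direction, set $u = \tfrac12 \log h$, which is harmonic on $\overbar{\Omega}$ by hypothesis. Then $u\circ\pi$ is harmonic on the simply connected disc, so it is the real part of a holomorphic function $g$ on $\mathbb D$, and $\tilde F := e^{g}$ satisfies $|\tilde F|^2 = e^{2u\circ\pi} = h\circ\pi$. For a deck transformation $A\in G$ we have $\pi\circ A = \pi$, hence $\operatorname{Re}(g\circ A - g)=0$ on $\mathbb D$; since $\mathbb D$ is connected this difference is a purely imaginary constant, so $\tilde F\circ A = \gamma(A)\,\tilde F$ for a unimodular scalar $\gamma(A)$. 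The identity $\tilde F \circ (AB) = (\tilde F\circ A)\circ B$ forces $\gamma(AB)=\gamma(A)\gamma(B)$, so $\gamma\in \operatorname{Hom}(G,\mathbb S^1)$ is a character and $\tilde F$ is modulus automorphic of index $\gamma$. Pushing down, $F := \tilde F\circ\pi^{-1}$ is a multiplicative function on $\Omega$ with $|F|^2 = h$; since $h$ is continuous and strictly positive on the compact set $\overbar{\Omega}$ it is bounded above and below by positive constants, whence $F\in H^\infty_\gamma(\Omega)$.

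Invertibility is then immediate: applying the same construction to $1/h$ (equivalently, taking the reciprocal function $G := 1/F = e^{-g}\circ\pi^{-1}$) yields a bounded multiplicative function with $|G|^2 = 1/h$ that transforms under each deck transformation $A$ by $\overline{\gamma(A)} = \gamma(A)^{-1}$, so $G\in H^\infty_{\gamma^{-1}}(\Omega)$ and $FG\equiv 1$. Boundedness of $G$ here uses exactly that $h$ is bounded away from $0$.

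The substance lies in the converse, realizing a prescribed character $\gamma$. Let $\omega_1,\dots,\omega_{k-1}$ be the harmonic measures of the inner boundary components of the $k$-connected domain $\Omega$; because the boundary is analytic, each $\omega_j$ extends harmonically across $\partial\Omega$ and is therefore harmonic on $\overbar{\Omega}$. For real coefficients $\lambda_j$ the function $u = \sum_j \lambda_j\omega_j$ is harmonic on $\overbar{\Omega}$, and its multivalued harmonic conjugate $v$ changes by the period $P_i(\lambda) = \sum_j \lambda_j \oint_{\gamma_i}\!*d\omega_j$ around the $i$-th homology cycle $\gamma_i$; the multiplicative function $e^{u+iv}$ (realized as $e^{(u+iv)\circ\pi}$ on $\mathbb D$) then satisfies $\gamma(A_i) = e^{iP_i(\lambda)}$ on the corresponding generator $A_i$ of $G$. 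Writing $\gamma(A_i)=e^{i\theta_i}$, I must solve $M\lambda \equiv \theta \pmod{2\pi}$ for the conjugate-period matrix $M = \big(\oint_{\gamma_i}\!*d\omega_j\big)$. Setting $h := e^{2u}$ then gives a positive function on $\overbar{\Omega}$ with $\log h = 2u$ harmonic and $|F|^2 = e^{2u} = h$ for the corresponding $F\in H^\infty_\gamma(\Omega)$.

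The main obstacle is precisely the nonsingularity of $M$, which is what guarantees that the linear system above is solvable and hence that every character in $\operatorname{Hom}(G,\mathbb S^1)$ is attained. I would establish it by the classical Green/Dirichlet-integral identity showing that $M$ coincides, up to sign, with the Gram matrix $\big(\int_\Omega \nabla\omega_i\cdot\nabla\omega_j\,dv\big)$ of the linearly independent harmonic measures $\omega_j$, which is positive definite and therefore invertible; alternatively one may invoke the standard treatment of multiplicative functions and their characters on finitely connected domains in \cite{FISHER, VOICHICKideal}.
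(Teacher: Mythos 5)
Your proof is correct and follows essentially the same route as the paper's: exponentiate $\tfrac12\log h$ plus (a branch of) its harmonic conjugate to get $F$ with $|F|^2=h$, read the character off the periods of the conjugate, obtain $G$ from the same construction applied to $1/h$, and for the converse prescribe arbitrary periods via linear combinations of the harmonic measures $\omega_j$, which works precisely because the period matrix is invertible. The only differences are presentational: you build the function on the universal cover and push it down (where the paper constructs the multiplicative function directly on $\Omega$ and invokes Voichick's lemma to lift it to a modulus automorphic function on $\mathbb D$), and you sketch the Green/Dirichlet-integral proof that the period matrix is positive definite where the paper simply cites Nehari.
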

\begin{proof}
 Let $u^*$ be the multiple value conjugate harmonic function of $\frac{1}{2}\log h$ defined on $\Omega.$  Let's denote the period of the multiple valued conjugate harmonic function $u^*$ around the boundary component $\partial \Omega_{j}$ by 
\begin{align*}
   c_j = -\int _{\partial \Omega _{j}} \frac{\partial}{\partial \eta _z}\big(\frac{1}{2}\log h(z)\big)ds_z, \;\;\;\mbox{for}\; j= 1,2,...,n.\;\;
\end{align*}
The negative sign in this equation appears since we have assumed that $\partial \Omega$ is positively oriented, hence the different components of the boundary $\partial \Omega_j,$ $j=1,2,\ldots, n,$ except the outer one are  oriented in clockwise direction. Now consider the function $F(z)$ defined by 
\begin{align*}
F(z) &= \exp(\frac{1}{2}\log h(z) + i u^*(z)). 
\end{align*} 
Now observe that $F$ is a multiplicative holomorphic  function  on $\Omega.$ Hence following \cite[Lemma 3.6]{VOICHICKideal}, we have a existence of modulus automorphic function $f$ on unit disc $\mathbb{D}$ so that $F = f \circ \pi ^{-1}.$ We find the index of the modulus automorphy for the function $f$ in the following way. Around each boundary component $\partial \Omega_{j},$ along the anticlockwise direction, the value of $F$ gets changed by $\exp(i c_j)$ times its initial value. So, the index of $f$ is determined by the $n$ tuple of numbers $(\gamma_1, \gamma_2,...,\gamma_n)$ given by,
\begin{align*}
\gamma_j &= \exp (i c_j),  \;\;\;\; j= 1,2,...,n.\;\;
\end{align*}
For each of these $n$ tuple of numbers, there exist a homomorphism $\gamma :\pi_1(\Omega) \to \mathbb{T}$ such that these $n$ tuple of numbers occur as  a image of the $n$ generator of the group $\pi_1(\Omega)$ under the map $\gamma.$ Hence $F$ belongs to $H^{\infty}_{\gamma}(\Omega)$ with $|F|^2 = h$ on $ \Omega.$ Note that  $|F(z)|$ is both bounded above and below on $\Omega.$ 

The function $\frac{1}{h}$ is also positive and $\log \frac{1}{h}$ is harmonic, as before, there exists a function $G$ in $H^{\infty}_{\delta}(\Omega)$ with $|G|^2 =\frac{1}{h}$ on $\partial \Omega.$ Since $\log\frac{1}{h} = - \log h$, it follows that index of $G$ is exactly $(\gamma_1^{-1}, \gamma_2^{-1},...,\gamma_n^{-1})$ and hence $\delta$ is equal to $\gamma^{-1}.$ Evidently $FG =1$ on $\Omega.$

For the last part of the lemma, recall that there exist functions $\omega _j(z)$ which is harmonic in $\Omega$ and has the boundary values $1$ on $\partial\Omega_j$  and is $0$ on all the other boundary components, for each $j=1,2,\ldots,n.$ Since the boundary of $\Omega$ consists of Jordan analytic curves, we have that the functions $\omega_j(z)$ is also harmonic on $\overbar{\Omega}.$ Let $p_{i,j}$ be the periods of the harmonic function $\omega_j$ around the boundary component $\partial\Omega_i,$ that is,

\begin{align*}
p_{i,j} &=  -\int _{\partial \Omega _{i}} \frac{\partial}{\partial \eta _z}\big(\omega_j(z)\big)ds_z, \;\;\;\mbox{for}\; i, j= 1,2,...,n\;\;
\end{align*}
The negative sign appears in the equation as it is assumed that $\partial \Omega$ is positively oriented, that is,  the boundary  components $\partial \Omega_j,$ $j=1,2,\ldots, n,$ except the outer one, namely $\partial \Omega_{n+1},$ are oriented in clockwise direction. So the period of the harmonic function $u(z)= a_1\omega_1(z)+a_2 \omega_2(z)+\cdots+a_n \omega_n(z)$ around the boundary component $\partial\Omega_i,$ is equal to $\sum_j p_{i,j}\alpha_j.$ It is well known that the $n\times n$ period matrix $(\!\!( p_{i,j} )\!\!)$ is positive definite and hence invertible (cf. \cite[Sec.10,Ch 1]{Nehari75conformal}). Thus it follows that for any $n$- tuple of real number, say $(b_1,b_2,\ldots,b_n)$ we have a harmonic function $u$ on $\overbar{\Omega}$ such that its period around boundary component $\partial\Omega_i,$ is equal to $b_i.$ Let $g$ be the positive function on $\overbar{\Omega}$ defined by $g(z)= \exp (2 u(z)),\,z\in\overbar{\Omega}.$ Now following the first part of the lemma, we have that there exists a $F$ in $H^{\infty}_{\gamma}(\Omega)$ such that $|F|^2 = g$ on $\overbar{\Omega}.$ Furthermore the character $\gamma$ is determined by 
\begin{align*}
\gamma_j = \exp(ib_j),\,\,\text{for}\, j=1,2,\ldots,n.
\end{align*}
As this is true for arbitrary $n$- tuple of real number $(b_1,b_2,\ldots,b_n),$ the result follows. 
\end{proof}

Let $h$ be a positive function on $\overbar{\Omega}$ such that $\log h$ is harmonic on $\overbar{\Omega}.$ We see that there is a $F$ in $H^{\infty}_{\gamma}(\Omega)$ with $|F|^2 = h$ on $\Omega$ and a $G$ in $H^{\infty}_{\gamma ^{-1}}(\Omega)$ with $|G|^2 = h^{-1}$ on $\Omega.$ Now consider the map  $M_F: \big(\mathbb A^2(\Omega), h dv\big) \to \big(\mathbb A^2_{\gamma}(\Omega),dv\big) ,$ defined by the equation 
\begin{align*}
M_F(g) =F g, \;\;\;\;g\in \big(\mathbb A^2(\Omega), h dv\big). 
\end{align*}
Clearly, $M_F$ is a unitary operator and its inverse is the operator $M_{G}.$ The multiplication operator $M_F$ intertwines the corresponding operator of multiplication by the coordinate function on the Hilbert spaces $\big(\mathbb A^2(\Omega), h dv\big)$ and $\big(\mathbb A^2_{\gamma}(\Omega),dv\big).$ The character $\gamma$ is determined by $\gamma_j(h) = \exp(ic_j(h)),$  where $c_j(h)$ is given by 
\begin{align*}
 c_j(h) = -\int _{\partial \Omega _{j}} \frac{\partial}{\partial \eta _z}\big(\frac{1}{2}\log h(z)\big)ds_z, \;\;\;\mbox{for}\; j= 1,2,...,n.\;\;
\end{align*}
  
 Conversely, following second part of the lemma \ref{existence of modulus auto}, for any character $\gamma$ there exist a positive function $h$ on $\overbar{\Omega}$ such that $\log h$ is harmonic on $\overbar{\Omega}$ and $h = |F|^2$ on $\overbar{\Omega}$ for some function $F$ in $H^{\infty}_{\gamma}(\Omega).$ Thus we have established a surjective map from the class $ \mathcal G = \{M \, \text{on}\, (\mathbb A^2(\Omega),  hdv)$:  $\log h$ is harmonic on $\overbar{\Omega} \}$  onto the class of Bergman bundle shift of rank 1, namely, $\{M \, \text{on}\, \big(\mathbb A^2_{\gamma}(\Omega),dv\big)$: $\gamma \in \text{Hom}(\pi_1(\Omega), \mathbb S^1)\}.$
 Thus we have proved the  theorem stated below.  
\begin{thm}
There is a bijective correspondence between the multiplication operators on the weighted Bergman spaces $\mathcal G$ and the bundle shifts $\{M \text{ on } \big(\mathbb A^2_{\gamma}(\Omega),dv\big)$: $\gamma \in \text{Hom}(\pi_1(\Omega), \mathbb S^1)\}.$
 \end{thm}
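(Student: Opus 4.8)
The plan is to assemble the constructions from the paragraphs preceding the statement into an explicit correspondence and then to verify that it is onto and one-to-one, the latter at the level of unitary equivalence classes.

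First I would record the forward map. Given $M$ on $(\mathbb A^2(\Omega), h\,dv)$ in $\mathcal G$, with $\log h$ harmonic on $\overbar{\Omega}$, Lemma \ref{existence of modulus auto} produces a multiplicative function $F \in H^{\infty}_{\gamma}(\Omega)$ satisfying $|F|^2 = h$ on $\Omega$, where the character $\gamma = \gamma(h)$ is read off from the boundary periods $\gamma_j(h) = \exp(i c_j(h))$ of the multivalued harmonic conjugate of $\tfrac{1}{2} \log h$. The same lemma furnishes an inverse $G \in H^{\infty}_{\gamma^{-1}}(\Omega)$ with $FG = 1$, so that $M_F$ is a unitary from $(\mathbb A^2(\Omega), h\,dv)$ onto $(\mathbb A^2_{\gamma}(\Omega), dv)$ intertwining the two operators of multiplication by the coordinate function. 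Hence the assignment sending $M$ on $(\mathbb A^2(\Omega), h\,dv)$ to the Bergman bundle shift $M$ on $(\mathbb A^2_{\gamma(h)}(\Omega), dv)$ carries each operator of $\mathcal G$ to a bundle shift that is unitarily equivalent to it.

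Next I would check surjectivity, which is exactly the content of the final assertion of Lemma \ref{existence of modulus auto}: since the $n \times n$ period matrix is positive definite, any prescribed tuple of periods, and hence any character $\gamma \in \text{Hom}(\pi_1(\Omega), \mathbb S^1)$, is realized by some positive $h$ with $\log h$ harmonic on $\overbar{\Omega}$. Consequently every Bergman bundle shift of rank $1$ lies in the image of the map.

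It remains to settle injectivity, which I regard as the crux. The natural domain is the collection of unitary equivalence classes in $\mathcal G$, and I would show that the induced map is a bijection onto the bundle shifts parametrized by their characters. The easy direction is immediate: if $\gamma(h_1) = \gamma(h_2)$, then the first step presents both $M$ on $(\mathbb A^2(\Omega), h_i\,dv)$ as unitarily equivalent to the single bundle shift on $(\mathbb A^2_{\gamma}(\Omega), dv)$, hence to one another, so every fibre of the map is a single equivalence class. To turn this into genuine injectivity I must also know that distinct characters produce inequivalent bundle shifts; I would establish this by recovering the character from a unitary invariant of the operator, for instance from the normalized kernel (a complete unitary invariant in $B_1(\Omega^*)$, as discussed above) together with the multiplicative structure of the functions defining the space, so that $\gamma$ becomes a complete invariant within the class. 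This last invariance statement, rather than the construction of the correspondence, is where I expect the real work to lie.
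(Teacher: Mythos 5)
Your construction of the correspondence and your surjectivity argument coincide with the paper's: Lemma~\ref{existence of modulus auto} supplies $F\in H^{\infty}_{\gamma}(\Omega)$ with $|F|^2=h$ together with an inverse $G\in H^{\infty}_{\gamma^{-1}}(\Omega)$, so that $M_F$ is a unitary from $(\mathbb A^2(\Omega), h\,dv)$ onto $\big(\mathbb A^2_{\gamma(h)}(\Omega),dv\big)$ intertwining the multiplications, and the invertibility of the period matrix (the last part of the same lemma) shows every character $\gamma$ arises from some admissible $h$. You are also right, and in fact more careful than the theorem's wording, to insist that the bijection can only live at the level of unitary equivalence classes, since distinct weights $h_1,h_2$ with equal periods give the same character.

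The one place where you diverge is the injectivity step, and there your proposal has a gap: you correctly isolate the needed statement (bundle shifts with distinct characters are unitarily inequivalent), but you only gesture at proving it by ``recovering the character from the normalized kernel,'' and you acknowledge that this is where the real work would lie. That sketch is not a proof; carrying it out would amount to showing that any unitary intertwining two rank-one Bergman bundle shifts is multiplication by a modulus automorphic function of the quotient character, which is a genuine theorem. The paper does not rederive this fact either --- it is exactly \cite[Theorem 18]{Bergmanbundle}, which is what the corollary immediately following the theorem invokes to conclude that $M$ on $(\mathbb A^2(\Omega), h_1 dv)$ and $M$ on $(\mathbb A^2(\Omega), h_2 dv)$ are unitarily equivalent if and only if $\gamma_j(h_1)=\gamma_j(h_2)$ for all $j$. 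So the efficient way to close your argument is to cite that classification rather than to attempt a from-scratch proof via the normalized kernel; with that citation in place, your proposal is complete and is otherwise the same as the paper's.
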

Also, the following corollary is an immediate consequence of  \cite[Theorem 18]{Bergmanbundle}.
\begin{cor}
Let $h_1, h_2$ be two positive function on $\overbar{\Omega}.$ Suppose that $\log h_i,$ $i=1,2,$ are harmonic on $\overbar{\Omega}.$ Then the operator $M$ on $(\mathbb A^2(\Omega),  h_1 dv)$ is unitarily equivalent to the operator $M$ on $(\mathbb A^2(\Omega),  h_2 dv)$ if and only if $\gamma_j(h_1) = \gamma_j(h_2)$ for $j=1,2,\ldots,n.$ 
\end{cor}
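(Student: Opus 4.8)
The plan is to deduce the corollary directly from the unitary equivalence between weighted Bergman spaces and Bergman bundle shifts that has just been assembled, together with the known unitary classification of the bundle shifts themselves. First I would apply Lemma~\ref{existence of modulus auto} and the intertwining argument following it to each weight separately. For $i=1,2,$ the positivity of $h_i$ and the harmonicity of $\log h_i$ on $\overbar{\Omega}$ yield a function $F_i \in H^{\infty}_{\gamma(h_i)}(\Omega)$ with $|F_i|^2 = h_i$ on $\Omega,$ where the character $\gamma(h_i)$ has components $\gamma_j(h_i) = \exp(ic_j(h_i))$ and $c_j(h_i) = -\int_{\partial\Omega_j}\frac{\partial}{\partial \eta_z}\big(\tfrac12\log h_i(z)\big)\,ds_z.$ The multiplication operator $M_{F_i}$ is then a unitary from $\big(\mathbb A^2(\Omega),h_i\,dv\big)$ onto $\big(\mathbb A^2_{\gamma(h_i)}(\Omega),dv\big)$ intertwining the two operators of multiplication by the coordinate function. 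Hence $M$ on $\big(\mathbb A^2(\Omega),h_i\,dv\big)$ is unitarily equivalent to the rank $1$ Bergman bundle shift $T_{\gamma(h_i)}.$

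Next I would invoke transitivity of unitary equivalence to transfer the question entirely to the bundle shift side: the operator $M$ on $\big(\mathbb A^2(\Omega),h_1\,dv\big)$ is unitarily equivalent to $M$ on $\big(\mathbb A^2(\Omega),h_2\,dv\big)$ if and only if $T_{\gamma(h_1)}$ is unitarily equivalent to $T_{\gamma(h_2)}.$ The final step is to apply the classification of Bergman bundle shifts in \cite[Theorem 18]{Bergmanbundle}, which asserts that $T_{\gamma_1}$ and $T_{\gamma_2}$ are unitarily equivalent precisely when $\gamma_1 = \gamma_2$ as elements of $\mathrm{Hom}(\pi_1(\Omega),\mathbb S^1),$ that is, when all $n$ of their components coincide. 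Reading off the components of $\gamma(h_i)$ then gives exactly the stated criterion $\gamma_j(h_1)=\gamma_j(h_2),$ $j=1,\ldots,n.$

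Since the statement is flagged as an immediate consequence of the cited theorem, there is no genuine obstacle: the substantive mathematical content is imported from \cite[Theorem 18]{Bergmanbundle}, and the weighted-Bergman side has already been packaged into the unitaries $M_{F_i}$ by Lemma~\ref{existence of modulus auto}. The only point requiring a little care is to confirm that the character produced by the construction for the weight $h_i$ is precisely the one whose components are the $\gamma_j(h_i)$ occurring in the statement, so that no separate matching of characters is needed; this identification is exactly the period computation recorded in Lemma~\ref{existence of modulus auto}, and it is what makes the two applications of the correspondence line up to give the biconditional.
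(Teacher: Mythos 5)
Your proposal is correct and follows exactly the route the paper intends: transfer each weighted Bergman multiplication operator $M$ on $(\mathbb A^2(\Omega), h_i\,dv)$ to the bundle shift $M$ on $\big(\mathbb A^2_{\gamma(h_i)}(\Omega),dv\big)$ via the unitary $M_{F_i}$ supplied by Lemma \ref{existence of modulus auto}, and then invoke the classification in \cite[Theorem 18]{Bergmanbundle} of rank $1$ Bergman bundle shifts by their characters. The paper itself treats the corollary as an immediate consequence of that theorem together with the correspondence just established, so your write-up simply makes explicit the same two-step argument.
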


\bibliographystyle{amsplain}\bibliography{bibfile}

\end{document}